\newtheorem{thm}{Theorem}[section]
\newtheorem{lemma}[thm]{Lemma}
\newtheorem{prop}[thm]{Proposition}
\newtheorem{cor}[thm]{Corollary}
\newtheorem{conj}[thm]{Conjecture}
\newenvironment{customthm}[1]
  {\innercustomthm}
  {\endinnercustomthm}
 \theoremstyle{definition}
\newtheorem{defn}[thm]{Definition}
\theoremstyle{remark}
\newtheorem{rmk}[thm]{Remark}
\newcommand{\nb}{\textup{nb}}
\newcommand{\rank}{\textup{rk }}
\newcommand{\Z}{\mathbb{Z}}
\numberwithin{equation}{subsection}
\numberwithin{enumi}{subsection}
\newcommand{\Rmnum}[1]{\expandafter\@slowromancap\romannumeral #1@}
\author[Jennifer Hom]{Jennifer Hom}
\address {Department of Mathematics, Columbia University, New York, NY 10027}
\email{hom@math.columbia.edu}
\author[Tye Lidman]{Tye Lidman}
\address {Department of Mathematics, The University of Texas, Austin, TX 78712}
\email{tlid@math.utexas.edu}
\author[Faramarz Vafaee]{Faramarz Vafaee}
\thanks{}
\address {Department of Mathematics, Michigan State University, East Lansing, MI 48824}
\email{vafaeefa@msu.edu}
\begin{document}
\title{Berge-Gabai knots and L-space satellite operations}
\date{}
\maketitle

\begin{abstract}
Let $P(K)$ be a satellite knot where the pattern, $P$, is a Berge-Gabai knot (i.e., a knot in the solid torus with a non-trivial solid torus Dehn surgery), and the companion, $K$, is a non-trivial knot in $S^3$. We prove that $P(K)$ is an L-space knot if and only if $K$ is an L-space knot and $P$ is sufficiently positively twisted relative to the genus of $K$. This generalizes the result for cables due to Hedden \cite{Hedden2009} and the first author \cite{Hom2011a}.
\end{abstract}

\section{Introduction} \label{sec:introduction}
In \cite{Ozsvath2013}, Oszv\'{a}th and Szab\'{o} introduced Heegaard Floer theory, which produces a set of invariants of three- and four-dimensional manifolds. One example of such invariants is $\widehat{HF}(Y)$, which associates a graded abelian group to a closed $3$-manifold $Y$. When $Y$ is a rational homology three-sphere, $\rank \widehat{HF}(Y) \ge |H_1(Y ; \mathbb{Z})|$ \cite{Ozsvath2004a}. If equality is achieved, then $Y$ is called an \emph{L-space}. Examples include lens spaces, and more generally, all connected sums of manifolds with elliptic geometry \cite{Ath}. L-spaces are of interest for various reasons. For instance, such manifolds do not admit co-orientable taut foliations \cite[Theorem 1.4]{Ozsvath2004b}.

A knot $K \subset S^3$ is called an \emph{L-space knot} if it admits a positive L-space surgery. Any knot with a positive lens space surgery is then an L-space knot. In \cite{Berge}, Berge gave a conjecturally complete list of knots that admit lens space surgeries, which includes all torus knots \cite{Moser1971}. Therefore it is natural to look beyond Berge's list for L-space knots. In \cite{Vafaee2013}, the third author classifies the twisted $(p, kp \pm 1)$-torus knots admitting L-space surgeries, some of which are known to live outside of Berge's collection.  Another related goal is to classify the satellite operations on knots that produce L-space knots. By combining work of Hedden \cite{Hedden2009} and the first author \cite{Hom2011a}, the $(m, n)$-cable of a knot $K \subset S^3$ is an L-space knot if and only if $K$ is an L-space knot and $n/m \ge 2 g(K) - 1$. (Here, $m$ denotes the longitudinal winding.) We generalize this result by introducing a new L-space satellite operation using Berge-Gabai knots \cite{Gabai1990} as the pattern. 

\begin{defn}\label{def:bg}
A knot $P \subset S^1 \times D^2$ is called a \emph{Berge-Gabai knot} if it admits a non-trivial solid torus filling.\footnote{Berge-Gabai knots, in the literature, are defined to be 1-bridge braids in solid tori with non-trivial solid tori fillings. We relax that definition to include torus knots as a proper subfamily.}
\end{defn}

To see that this satellite operation is a generalization of cabling, it should be noted that any torus knot with the obvious solid torus embedding is a Berge-Gabai knot \cite{Seifert1933}. Note also that any Berge-Gabai knot $P$ which is isotopic to a positive braid, when considered as a knot in $S^3$, admits a positive lens space surgery; for if performing appropriate surgery on $P$ in one of the solid tori in the genus one Heegaard splitting of $S^3$ returns a solid torus, then the corresponding surgery on the knot in $S^3$ will result in a lens space.  For positive braids, this surgery is positive by Lemma~\ref{lem:bgcharacterization} and \cite[Proposition 3.2]{Moser1971}.

\begin{figure}[t!]
 \begin{center}
 \subfigure[]
 {
  \includegraphics[scale=.5]{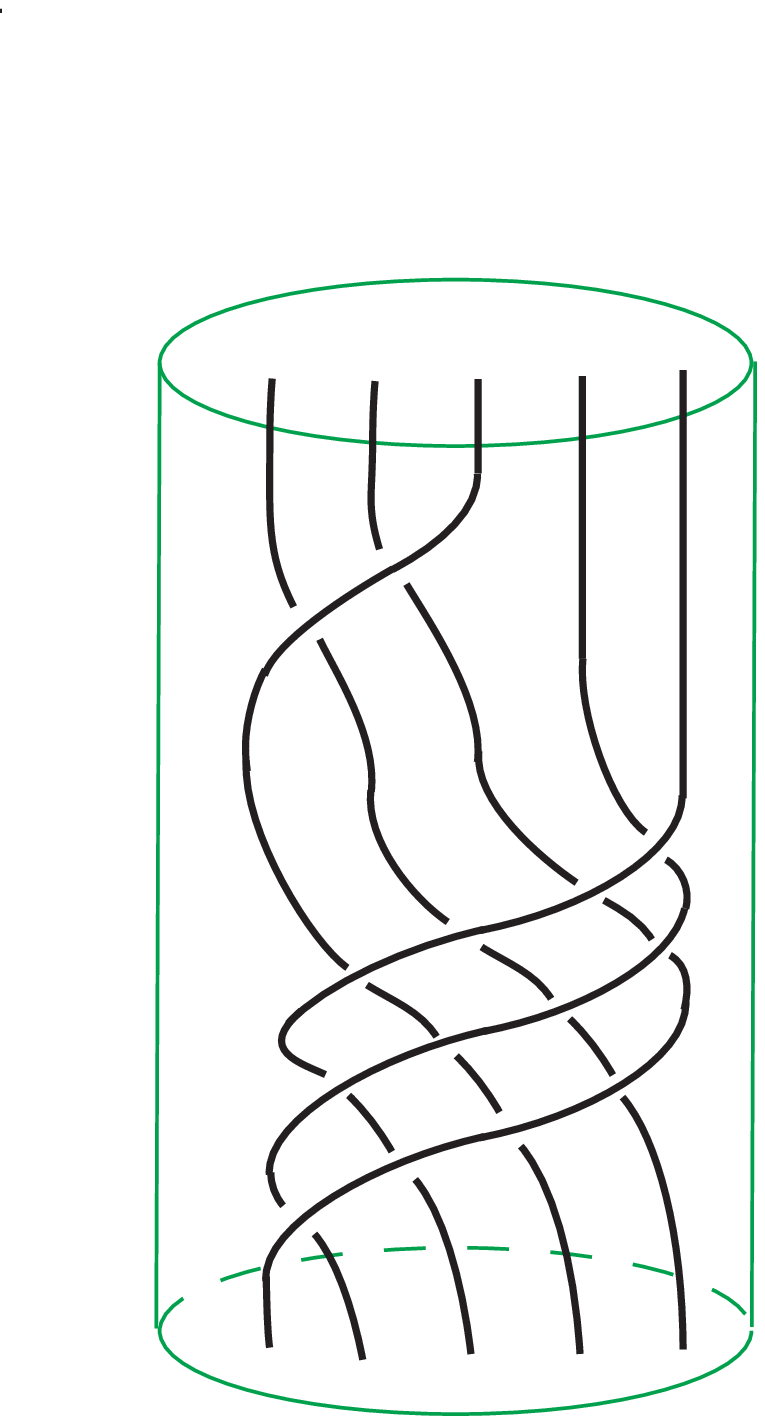}  
\label{fig1:subfig1}
   } 
\subfigure[]
{
\psfrag{l}{$\ell$}
\psfrag{m}{$m$}
\psfrag{r}{\tiny{$\Lambda$}}
\psfrag{u}{\tiny{$\mu$}}
\psfrag{A}{\small$A$}
\psfrag{D}{$D$}
  \includegraphics[scale=.5]{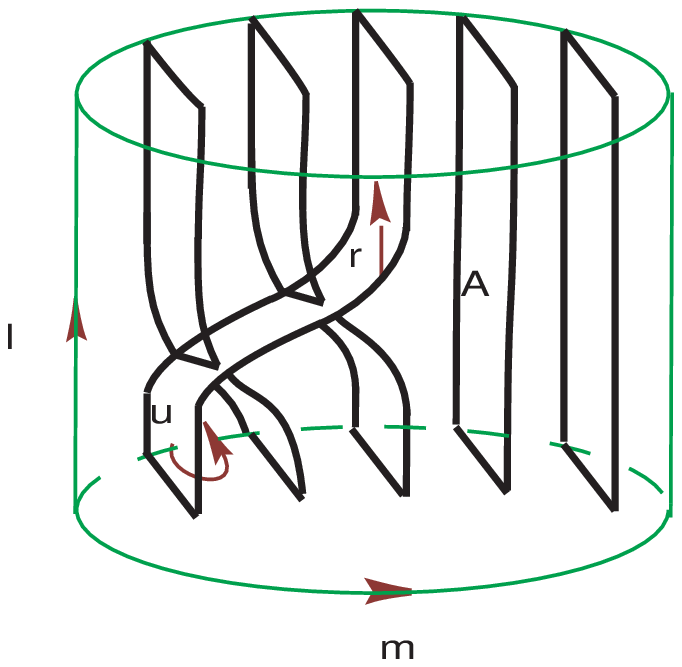}
   \label{fig1:subfig2}
   }
\caption{\small{Berge-Gabai knots are knots in $S^1 \times D^2$ with non-trivial solid tori fillings. Such knots are always the closure of the braid $(\sigma_b \sigma_{b-1} \ldots \sigma_1)(\sigma_{w-1} \sigma_{w-2} \ldots \sigma_1)^t$ where $0 \le b \le w-2$, and $|t| \ge 1$. (a) An example of a braid in a solid cylinder $I \times D^2$ that closes to form a Berge-Gabai knot with $b = 2$, $t = 3$, and $w = 5$. (The fact that the picture depicted above represents a Berge-Gabai knot is verified in \cite[Example~3.8]{Gabai1990}.) Recall that we write $t=t_0+qw$, where here $t_0=3$ and $q=0$. (b) An immersed annulus $A$ that can be arranged to be an embedded surface in $V = S^1 \times D^2$ joining $P$ to $T=\partial V$ by performing oriented cut and paste and adding a $2\pi t/w$ twist. Note that the embedded surface $A$ provides, in the exterior of $P$, a homology from $w \ell + t m$ in $T$ to $\Lambda$ in $J = \partial \text{nb}(P)$.}}
\label{fig1}
\end{center}
\end{figure}
It is shown in \cite{Gabai1989} that any Berge-Gabai knot must be either a torus knot or a 1-bridge braid in $S^1 \times D^2$.  More precisely, every Berge-Gabai knot $P \subset V = S^1 \times D^2$ is necessarily of the following form. (For a sufficient condition determining when a knot of this form is a Berge-Gabai knot, see \cite[Lemma 3.2]{Gabai1990}.) In the braid group $B_w$, where $w$ is an integer with $w \ge 2$, let $\sigma_i$ denote the generator of $B_w$ that performs a positive half twist on strands $i$ and $i+1$. Let $\sigma = \sigma_b \sigma_{b-1} \ldots \sigma_1$ be a braid in $B_w$ with $0 \le b \le w-2$  and let $t$ be a nonzero integer. Place $\sigma$ in a solid cylinder and glue the ends by a $2 \pi t/w$ twist, i.e., form the closure of the braid word $(\sigma_b \sigma_{b-1} \ldots \sigma_1)(\sigma_{w-1} \sigma_{w-2} \ldots \sigma_1)^t$. We only consider the case where this construction produces a knot, rather than a link.  This construction forms a torus knot if $b = 0$ and a 1-bridge representation of $P$ in $V$ if $1 \le b \le w-2$. We call $w$ the \emph{winding number}, $b$ the \emph{bridge width}, and $t$ the \emph{twist number} of $P$. Note that the twist number can be written as $t = t_0 + q w$ for some integers $t_0$ and $q$ where $t_0$ can be chosen so that $1 \le t_0 \le w-1$.\footnote{Our construction of Berge-Gabai knots, which enables us to define them up to isotopy of the knot in $S^1 \times D^2$, is slightly different than that of Gabai~\cite{Gabai1990}. In Gabai's original construction, he always took $q = 0$ and considered knots in the solid torus up to homeomorphism of $S^1 \times D^2$  taking one knot to the other.}  See Figure \ref{fig1:subfig1}.  Also, note that if $b \ne 0$ then the possibility of $t_0 = w-1$ is disallowed as otherwise we would obtain a link with at least two components \cite{Gabai1990}. 

\begin{rmk}\label{rmk}
Note that if $t<0$, then the braid $\sigma = (\sigma_b \sigma_{b-1} \ldots \sigma_1)(\sigma_{w-1} \sigma_{w-2} \ldots \sigma_1)^t$ is isotopic to a negative braid:
\begin{align*}
	\sigma &\sim (\sigma_b \sigma_{b-1} \ldots \sigma_1)(\sigma_{w-1} \sigma_{w-2} \ldots \sigma_1)^t \\
		&\sim (\sigma_{w-1} \sigma_{w-2} \ldots \sigma_{b+1})^{-1}(\sigma_{w-1} \sigma_{w-2} \ldots \sigma_1)^{t+1}.
\end{align*}
\end{rmk}
 
We are now ready to state the main result. Let $P(K)$ denote a satellite knot with pattern $P$ and companion $K$.  
{\thm \label{thm}Let $P$ be a Berge-Gabai knot with bridge width $b$, twist number $t$, and winding number $w$, and let $K$ be a non-trivial knot in $S^3$. Then the satellite $P(K)$ is an L-space knot if and only if $K$ is an L-space knot and  $\frac{b+ t w}{w^2} \ge 2 g(K) - 1.$} 
\\

Note that when $b = 0$, we can take $w = m$ and $t = n$, and Theorem \ref{thm} reduces to the cabling result of \cite{Hedden2009, Hom2011a}. A version of the ``if" direction of Theorem~\ref{thm} appears in \cite[Proposition~7.2]{motegi2014}.

The outline of the proof of Theorem \ref{thm} is as follows. By applying techniques developed in \cite{Gabai1990, Gordon1983} to carefully explore the framing change of the solid torus surgered along $P$, we prove the ``if'' direction of the theorem. More precisely, surgery on $P(K)$ corresponds to first doing surgery on $P$ (namely removing a neighborhood of $P$ from $S^1 \times D^2$ and Dehn filling along the new toroidal boundary component) and, second, attaching this to the exterior of $K$.  Therefore, if one chooses the filling on $P$ such that the result is a solid torus (using that $P$ is a Berge-Gabai knot), then the overarching surgery on $P(K)$ corresponds to attaching a solid torus to the exterior of $K$ (performing surgery on $K$). Moreover, note that by positively twisting $P$ by performing a positive Dehn twist on $S^1 \times D^2$ (i.e., increasing $q$), we can obtain an infinite family of Berge-Gabai knots. Fixing an L-space knot $K$, for sufficiently large $q$, the satellite $P(K)$ will admit a positive L-space surgery. Finally, the ``only if'' direction is proved by methods similar to those used in \cite{Hom2011a}.

In order to prove Theorem \ref{thm}, we establish the following lemma, which may be of independent interest.
\begin{lemma}
\label{lem:negbraid}
Let $P \subset S^1 \times D^2$ be a negative braid and $K \subset S^3$ be an arbitrary knot. Then the satellite knot $P(K)$ is never an L-space knot.
\end{lemma}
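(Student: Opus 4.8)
The plan is to show that a negative braid pattern forces $P(K)$ to violate a known necessary condition for L-space knots. The key structural fact about L-space knots (due to Ozsváth–Szabó) is that if $J$ is an L-space knot, then all coefficients of the Alexander polynomial $\Delta_J(t)$ are $\pm 1$, they alternate in sign, and the knot Floer homology $\widehat{HFK}(J,i)$ is $\mathbb{Z}$ or $0$ in each Alexander grading; in particular $\widehat{HFK}$ in the top Alexander grading $g(J)$ is supported in Maslov grading $0$, and more generally the ``staircase'' shape is highly constrained. A cleaner route, which I expect is the one used here, is via the \emph{positivity} of L-space knots: any nontrivial L-space knot is fibered (Ghiggini, Ni) and, crucially, its Seifert genus is detected by the top nonzero coefficient of $\Delta$, and the next-to-top Alexander grading satisfies $\widehat{HFK}(J, g(J)-1) \ne 0$. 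Even more to the point, an L-space knot must have $\tau(J) = g(J) > 0$ and, reading off the surgery formula, the first author's argument in \cite{Hom2011a} shows that the relevant satellite formula for $\widehat{HFK}$ or for the $\varepsilon$/$\nu^+$ invariants forces a contradiction when the pattern is negative.

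First I would set up the winding number dichotomy. If $P$ has winding number $w = 0$, then $P(K)$ is (Alexander-polynomial-wise) essentially the pattern knot $P \subset S^3$ regarded via the standard embedding, and since a negative braid closure that is nontrivial has $\tau < 0$ (its mirror is a positive braid, hence an L-space knot candidate with $\tau = g > 0$), $P(K)$ cannot be an L-space knot — L-space knots require $\tau = g > 0$, equivalently $\Delta$ has top coefficient $+1$ with positive top degree, whereas a negative braid's Alexander polynomial has its extremal behavior with the wrong sign. If $w \ne 0$, I would use the satellite formula for the Alexander polynomial, $\Delta_{P(K)}(t) = \Delta_P(t)\,\Delta_K(t^w)$, combined with the genus formula $g(P(K)) = g(P) + w\, g(K)$ for the satellite (valid here since the pattern is a braid closure, so fibered, and the companion is nontrivial). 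For $P(K)$ to be an L-space knot, $\Delta_{P(K)}$ must be alternating with $\pm 1$ coefficients. The point is that a negative braid $P$, viewed in $S^3$, is either the unknot or has $\Delta_P$ with leading coefficient of sign $(-1)^{g(P)}$ times the wrong thing — concretely, the mirror $\bar P$ is a positive braid, so if $P$ itself were to make $P(K)$ an L-space knot we would be forced into the situation that $\bar P$ gives a negative L-space surgery, which is impossible.

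The cleanest execution, and the one I would actually write, uses the $\nu^+$ or $\varepsilon$ invariant: Hedden's cabling formula and its satellite generalizations express whether $P(K)$ admits a positive L-space surgery in terms of $P$ being ``sufficiently positive.'' For a negative braid, the relevant twisting parameter is $\le 0$, so no amount of positivity is available: the analogue of the inequality $\frac{b + tw}{w^2} \ge 2g(K) - 1$ from Theorem \ref{thm} can never hold because the left-hand side is negative (since $t < 0$ and $b \le w - 2$ give $b + tw \le w - 2 - w < 0$ when $|t| \ge 1$, using $w \ge 2$), while the right-hand side is $\ge 2g(K) - 1 \ge 1 > 0$ as $K$ is nontrivial. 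So I would reduce Lemma \ref{lem:negbraid} to: (i) if $P(K)$ is an L-space knot then $K$ must be an L-space knot and $P$ must be a Berge-Gabai knot satisfying the Theorem's inequality — but a negative braid is, by Remark \ref{rmk}, isotopic to a braid of the form $(\sigma_{w-1}\cdots\sigma_{b+1})^{-1}(\sigma_{w-1}\cdots\sigma_1)^{t+1}$, i.e., it sits in the Berge-Gabai family with a non-positive effective twist, so the inequality fails; and (ii) the genus/Alexander-polynomial obstruction handles the winding number zero case and the non-Berge-Gabai negative braids directly.

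The main obstacle I anticipate is \emph{logical circularity}: Theorem \ref{thm}'s ``only if'' direction is the part whose proof relies on this Lemma, so I cannot invoke Theorem \ref{thm} here. I must instead argue intrinsically, presumably by showing directly that $\widehat{HFK}(P(K), g(P(K)))$ or the relevant staircase complex has a generator in nonzero Maslov grading (or that $\tau(P(K)) < g(P(K))$), using the behavior of the $\tau$ invariant under satellites with a negative pattern — e.g., an inequality like $\tau(P(K)) \le w\,\tau(K) + \tau(P) \le w\, g(K) + \tau(P)$ where $\tau(P) < g(P)$ strictly for a nontrivial negative braid (whose mirror is a nontrivial positive braid with $\tau = g$), yielding $\tau(P(K)) < g(P) + w\,g(K) = g(P(K))$, which contradicts the requirement $\tau = g$ for an L-space knot. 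Nailing down the correct satellite inequality for $\tau$ (or $\varepsilon$, $\nu^+$) with a negative-braid pattern, and verifying strictness, is the technical heart of the argument; everything else is bookkeeping with the braid normal form from Remark \ref{rmk} and the genus additivity for satellites of fibered patterns.
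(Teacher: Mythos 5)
Your proposal has a genuine gap: the entire argument rests on a satellite inequality for $\tau$ of the form $\tau(P(K)) \le w\,\tau(K) + \tau(P)$ (with strictness coming from $\tau(P) < g(P)$ for a nontrivial negative braid closure), and you explicitly defer ``nailing down the correct satellite inequality'' as ``the technical heart.'' That inequality is not a known general result for satellite patterns, and no proof of it is supplied; without it the argument does not close. Two further points would need repair even if such an inequality were available: (i) the winding-number-zero case you set up is vacuous here, since a braid in $S^1 \times D^2$ has winding number equal to its braid index, which is at least $1$; and (ii) the assertion that a negative braid pattern would force ``a negative L-space surgery, which is impossible'' is false as stated --- plenty of knots admit negative L-space surgeries (e.g., the left-handed trefoil); the convention that an L-space \emph{knot} requires a \emph{positive} L-space surgery is what matters, and your sketch does not connect the mirror argument to that convention. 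You are right to flag the circularity danger of invoking Theorem \ref{thm}, but the alternative route you propose is not yet an argument.

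For contrast, the paper's proof is entirely different and avoids any $\tau$ computation for satellites. It uses: (1) L-space knots are fibered, so $P$ is fibered in the solid torus and the fiber of $P(K)$ is obtained by gluing the fiber of $P$ to $w$ copies of the fiber of $K$; (2) the fiber surface of a negative braid is a plumbing of negative Hopf bands (Stallings), so one can deplumb a negative Hopf band from the fiber of $P(K)$, exhibiting it as a Murasugi sum with a non-quasipositive summand; (3) by Rudolph, a Murasugi sum is quasipositive iff every summand is, so the fiber of $P(K)$ is not quasipositive; (4) but L-space knots are strongly quasipositive (Hedden), a contradiction. If you want to salvage a $\tau$-based approach, you would need to prove the relevant satellite bound from scratch, which is likely harder than the fiber-surface argument.
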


\noindent We point out that Lemma~\ref{lem:negbraid} can be extended more generally to the case that $P$ is a homogeneous braid which is not isotopic to a positive braid \cite[Theorem 2]{Stallings1978}.  The proof of Lemma~\ref{lem:negbraid} was inspired by the arguments in \cite{BM}.  

We have the following corollary concerning the Ozsv\'ath-Szab\'o concordance invariant $\tau$ and the smooth $4$-ball genus.

\begin{cor}
Let $P \subset S^1 \times D^2$ be a Berge-Gabai knot and $K \subset S^3$ be an L-space knot. If $\frac{b+ t w }{w^2} \geq 2 g(K) - 1$, then
\[ \tau(P(K)) = \tau(P) + w \tau(K), \]
and
\[ g_4(P(K)) = g_4(P) + w g_4(K), \]
where $\tau(P)$, respectively $g_4(P)$, denotes $\tau$, respectively the $4$-ball genus, of the knot obtained from the standard embedding of $S^1 \times D^2$ into $S^3$.
\end{cor}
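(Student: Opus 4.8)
\emph{Proof proposal.} The plan is to deduce this entirely from Theorem~\ref{thm} together with three standard inputs: (i) every L-space knot $J$ satisfies $\tau(J)=g_4(J)=g(J)$ and has symmetrized Alexander polynomial whose breadth (span of exponents) equals $2g(J)$, by Ozsv\'ath--Szab\'o; (ii) a Berge--Gabai knot isotopic to a positive braid admits a positive lens space surgery, as recalled in the introduction, and hence is an L-space knot; and (iii) the classical satellite formula $\Delta_{P(K)}(t)\doteq\Delta_{P}(t)\,\Delta_{K}(t^{w})$, in which $\Delta_{P}$ denotes the Alexander polynomial of $P$ under the standard embedding $S^1\times D^2\hookrightarrow S^3$. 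If $K$ is the unknot both asserted identities are trivial, so I assume $K$ is nontrivial.

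First I would check that the hypothesis forces $P$ to be a positive braid. Since $K$ is nontrivial, $g(K)\ge 1$, so $\tfrac{b+tw}{w^{2}}\ge 2g(K)-1$ gives $b+tw\ge w^{2}$; as $0\le b\le w-2$ and $w\ge 2$, this forces $t\ge 1$, whence the defining word $(\sigma_b\cdots\sigma_1)(\sigma_{w-1}\cdots\sigma_1)^{t}$ of $P$ is a positive braid. By (ii) the knot $P\subset S^3$ is then an L-space knot; $K$ is an L-space knot by hypothesis; and $P(K)$ is an L-space knot by Theorem~\ref{thm}. Applying (i) to each of $P$, $K$, $P(K)$ gives $\tau=g_4=g$ for each of the three knots, and that the breadth of each of $\Delta_{P}$, $\Delta_{K}$, $\Delta_{P(K)}$ is twice the respective genus.

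Next I would extract the genus identity from (iii): since the breadth of $\Delta_{K}(t^{w})$ is $w$ times the breadth of $\Delta_{K}$, and the breadth of a product of (nonzero) Laurent polynomials is the sum of their breadths, taking breadths in $\Delta_{P(K)}(t)\doteq\Delta_{P}(t)\,\Delta_{K}(t^{w})$ yields $g(P(K))=g(P)+w\,g(K)$. Combining this with the equalities from the previous step gives
\[
\tau(P(K))=g(P(K))=g(P)+w\,g(K)=\tau(P)+w\,\tau(K),
\]
and, identically, $g_4(P(K))=g(P(K))=g_4(P)+w\,g_4(K)$, which is the claim.

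There is no deep obstacle, since all the substantive work is in Theorem~\ref{thm}; the points that warrant care are verifying $t\ge 1$ (so that $P$ is genuinely a positive braid closure, hence an L-space knot) and invoking the satellite Alexander polynomial formula with $\Delta_{P}$ correctly interpreted via the standard embedding — which is also why I would do the degree bookkeeping with breadth rather than top degree. As an alternative to (iii) one can instead obtain $g(P(K))=g(P)+w\,g(K)$ from additivity of the Thurston norm along the incompressible companion torus, which is clean here because $P$, $K$, and $P(K)$ are all fibered, but the polynomial computation is shorter. Finally, since $P$ is a positive braid on $w$ strands with $b+t(w-1)$ crossings, one may record the explicit value $\tau(P)=g_4(P)=g(P)=\tfrac12\big((t-1)(w-1)+b\big)$.
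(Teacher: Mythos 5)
Your proposal is correct and follows essentially the same route as the paper: conclude that $P$, $K$, and $P(K)$ are all L-space knots (using the positivity of $t$ forced by the hypothesis, the lens-space surgery on positive-braid Berge--Gabai knots, and Theorem~\ref{thm}), apply $\tau=g_4=g$ for L-space knots, and obtain $g(P(K))=g(P)+wg(K)$ from the satellite Alexander polynomial formula. The only cosmetic difference is that the paper packages the genus additivity as Lemma~\ref{lem3} via fiberedness of $P(K)$, whereas you rederive it from the breadth of the Alexander polynomial of each L-space knot; the content is identical.
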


\begin{proof}
If $J$ is an L-space knot, then $\tau(J)=g_4(J)=g(J)$ by \cite[Corollary 1.3]{Ni2009} and \cite[Corollary 1.6]{Ath}. Furthermore, by Lemma \ref{lem3},
\[ g(P(K)) = g(P) + wg(K).\]
By assumption, $K$ is an L-space knot.  The result is clear if $K$ is trivial, so assume that $K$ is non-trivial.  Since $P$ is a Berge-Gabai knot with a necessarily positive twist number, it follows that $P$ is isotopic to a positive braid. Therefore, by the discussion following Definition~\ref{def:bg}, $P$ has a positive lens space surgery, and thus is an L-space knot.  Furthermore, by Theorem~\ref{thm}, we also have that $P(K)$ is an L-space knot, and the result follows.
\end{proof}

Theorem~\ref{thm} allows one to construct new examples of L-spaces as follows.  First, begin with any L-space knot and then satellite with a Berge-Gabai knot satisfying the conditions in Theorem~\ref{thm}.  Sufficiently large positive surgery will then result in an L-space.  Using this technique, we will construct L-spaces with any numbers of hyperbolic and Seifert fibered pieces in the JSJ decomposition. 

\begin{thm}\label{thm:jsj}
Let $r$ and $s$ be non-negative integers such that at least one is non-zero.  Then there exist infinitely many irreducible L-spaces whose JSJ decompositions consist of exactly $r$ hyperbolic pieces and $s$ Seifert fibered pieces.
\end{thm}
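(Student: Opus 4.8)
The plan is to build, for the given $r$ and $s$, an iterated satellite knot $K'\subset S^3$ which is an L-space knot and whose exterior has a JSJ decomposition with exactly $r$ hyperbolic and $s$ Seifert fibered pieces, and then to take a large surgery on $K'$. For building blocks I fix a hyperbolic L-space knot $K_0$ (e.g.\ the pretzel knot $P(-2,3,7)$) when $r\geq 1$, or a non-trivial torus knot $K_0$ when $r=0$; the exterior of $K_0$ is then a single hyperbolic, respectively Seifert fibered, piece. Among Berge--Gabai patterns I use torus-knot patterns ($b=0$), whose complement in $V=S^1\times D^2$ is a cable space (Seifert fibered over the annulus with one exceptional fiber), and a fixed $1$-bridge braid pattern $P^{h}\subset V$ whose complement $V\setminus\nu(P^h)$ is hyperbolic; note that adding a full twist to the braid is realized by a self-homeomorphism of $V$, so all positive twists $P^h_q$ of $P^h$ have the same (hyperbolic) complement in $V$ even though the satellites $P^h_q(\cdot)$ differ.

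I would then assemble $K'$ by applying to $K_0$ a sequence of satellite operations using altogether $s$ torus-knot patterns and $r-1$ copies of $P^h$ (just the $s$ torus-knot patterns when $r=0$), arranged so that the outermost pattern is a copy of $P^h$ whenever this is possible, i.e.\ whenever $r\geq 2$. At each stage I take the twisting large enough --- increasing $q$ for a copy of $P^h$, or taking $n/m$ large for a torus-knot pattern --- that the inequality $\frac{b+tw}{w^2}\geq 2g(\cdot)-1$ of Theorem~\ref{thm} holds relative to the genus of the knot built so far; this is possible since the left side is unbounded while the relevant genus is fixed, so by Theorem~\ref{thm} and induction $K'$ is an L-space knot. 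Along its companion tori, $S^3\setminus\nu(K')$ splits as the exterior of $K_0$, one cable space per torus-knot pattern, and one hyperbolic piece per copy of $P^h$; these tori are incompressible and non-boundary-parallel, and no two adjacent pieces amalgamate (a hyperbolic piece never merges with a neighbor, and adjacent cable spaces are kept from merging by choosing cabling slopes generically, as in the standard JSJ description of iterated satellite knot exteriors). Hence $S^3\setminus\nu(K')$ has exactly $r$ hyperbolic and $s$ Seifert fibered JSJ pieces.

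Finally, since $K'$ is an L-space knot, $S^3_N(K')$ is an L-space for all integers $N\geq 2g(K')-1$. Writing $S^3\setminus\nu(K')=X\cup_T W$ with $X$ the outermost JSJ piece, one has $S^3_N(K')=X(N)\cup_T W$. If $X$ is hyperbolic, Thurston's hyperbolic Dehn surgery theorem makes $X(N)$ hyperbolic for all but finitely many $N$, the torus $T$ remains essential and unabsorbed, and the JSJ profile is unchanged. If $X$ is a cable space --- which happens only when $r\in\{0,1\}$ and $s\geq 1$ --- then by Gordon's surgery formula for cable knots \cite{Gordon1983}, for all but finitely many $N$ the filling $X(N)$ is a Seifert fibered space over the disk with two exceptional fibers of orders $\geq 2$, a genuine Seifert piece with incompressible torus boundary, glued along $T$ to the exterior of the knot obtained from $K'$ by deleting its outermost cable; one checks, again by genericity of $N$ (or induction on $s$), that $T$ is not absorbed, so the count is still $r$ hyperbolic and $s$ Seifert fibered pieces. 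These manifolds are irreducible for $N$ large, and as $N$ varies the first homology $\mathbb{Z}/N$ takes infinitely many values, producing infinitely many pairwise non-homeomorphic examples.

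I expect the main obstacle to be the case where the outermost JSJ piece is a cable space: there one must control Dehn fillings of a cable space explicitly and confirm that the resulting Seifert fibered piece is a genuine JSJ piece and is not absorbed into its neighbor --- which is why the construction keeps the outermost piece hyperbolic whenever possible and otherwise relies on Gordon's computation together with a generic choice of parameters. A secondary technical point is exhibiting a hyperbolic $1$-bridge braid Berge--Gabai pattern and verifying that twisting preserves hyperbolicity of the pattern complement.
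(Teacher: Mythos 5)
Your proposal follows essentially the same route as the paper: build an iterated satellite L-space knot whose exterior has the prescribed JSJ profile, using torus-knot (cable) patterns for the Seifert fibered pieces and sufficiently positively twisted hyperbolic Berge--Gabai patterns for the hyperbolic pieces, then take all sufficiently large surgeries. The paper disposes of your main worry (the effect of the final Dehn filling on the outermost piece) by invoking the standard fact that all but finitely many Dehn fillings of a compact orientable irreducible manifold with incompressible torus boundary are irreducible and have the same numbers of hyperbolic and Seifert fibered JSJ pieces; it also orders the construction so that the hyperbolic patterns are outermost, which makes your explicit cable-space-filling analysis via Gordon unnecessary except when $r=0$. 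One slip to fix: in the case $r=0$ you start from a non-trivial torus knot, whose exterior already contributes one Seifert fibered piece, and then apply $s$ torus-knot patterns, yielding $s+1$ Seifert pieces rather than $s$; you should apply only $s-1$ cablings there (the paper's $K_s$ is an $s$-fold iterated torus knot, i.e.\ a torus knot cabled $s-1$ times). Otherwise the argument is sound, and your closing observation that $|H_1|=N$ varies gives a cleaner justification of infinitude than the paper makes explicit.
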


As discussed, an L-space cannot admit a co-orientable taut foliation. Therefore, Theorem~\ref{thm:jsj} will yield irreducible rational homology spheres without co-orientable taut foliations whose JSJ decompositions consist of any numbers of hyperbolic and Seifert fibered pieces.  We remark that all rational homology spheres with Sol geometry are L-spaces \cite{BGW2012}.

It is also natural to ask in what sense Theorem~\ref{thm} generalizes; in particular, given a satellite knot which is an L-space knot, what must hold for the pattern or the companion?  We propose the following conjecture (see also \cite[Question 22]{BM}).

\begin{conj}\label{conj:satlspace}
If $P(K)$ is an L-space knot, then so are $K$ and $P$.  
\end{conj}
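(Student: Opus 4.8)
\medskip\noindent\emph{Sketch of a possible approach.} This remains open, so what follows is a strategy rather than a proof; the natural framework is bordered Heegaard Floer homology, and specifically the immersed-curve reformulation of the type-$D$ invariant of a knot complement due to Hanselman--Rasmussen--Watson. Assume $K$ is nontrivial (otherwise $P(K)=P$ and there is nothing to prove) and, after reorienting, that the winding number $w$ of $P$ is nonnegative; the case $w=0$ would be handled separately. When $w\ge 1$ the companion torus $T$ is essential in the exterior $M:=S^3\setminus\nu(P(K))$, and $M=X_P\cup_T X_K$ where $X_P=(S^1\times D^2)\setminus\nu(P)$ and $X_K=S^3\setminus\nu(K)$; filling $M$ along a slope $\alpha$ on $\partial\nu(P(K))$ gives $M(\alpha)=X_P(\alpha)\cup_T X_K$, i.e.\ the companion exterior glued to a Dehn filling of the pattern exterior.

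\emph{The companion.} Since $P(K)$ is an L-space knot, $M$ is an L-space knot complement, so $\gamma(M)$ is a single embedded curve of ``staircase'' type; equivalently $M(\alpha)$ is an L-space for every $\alpha$ in the nondegenerate arc $[2g(P(K))-1,\infty]$. The plan is to translate this into a constraint on $X_K$ via the pairing theorem: $\gamma(M(\alpha))$ is the pairing of the (two-boundary) bordered invariant of $X_P$, filled along $\alpha$, with $\gamma(X_K)$, so that $M(\alpha)$ is an L-space exactly when $\gamma(X_P(\alpha))$ and $\gamma(X_K)$ intersect minimally on $T$. The key structural input to establish is that the satellite operation does not decrease Floer complexity: if $\gamma(X_K)$ is not the simple ``L-space'' curve (i.e.\ $K$ is not an L-space knot up to mirror), then its nonsimple component forces a nonminimal intersection with $\gamma(X_P(\alpha))$ for every $\alpha$ in an arc of slopes, contradicting that $M(\alpha)$ is an L-space throughout $[2g(P(K))-1,\infty]$. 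Granting this, $K$ is an L-space knot or the mirror of one, and ruling out the mirror should come from positivity: $P(K)$ admits a \emph{positive} L-space surgery, and one tracks this either through $\tau$ (using a satellite inequality together with $\tau(P(K))=g(P(K))>0$ and $w\ge1$) or by checking which arc of slopes on $\partial\nu(K)$ can be an L-space arc once $\alpha$ is large and positive.

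\emph{The pattern.} Apply the same formalism to the standard embedding $P\subset S^3$: here $S^3\setminus\nu(P)=X_P\cup_T V_0$ with $V_0$ a solid torus glued standardly (the companion is now the unknot), and $\gamma(S^3\setminus\nu(P))$ is the pairing of the bordered invariant of $X_P$ with the simplest possible curve, the core-slope line of $V_0$. We already know this pattern invariant, paired with the particular simple curve $\gamma(X_K)$, yields a staircase; the goal is a monotonicity statement showing the output for the unknot ``embeds in'' the output for $K$, so a staircase for $K$ degenerates to a staircase for the unknot and $P$ is an L-space knot. One could also try the more classical route — $P(K)$ fibered forces $P$ to be a fibered pattern, and $g(P(K))=g(P)+wg(K)$ with $g_4=g$ constrains $\Delta_P$ — but fiberedness and the Alexander polynomial alone are nowhere near sufficient.

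\emph{Main obstacle.} Both halves hinge on the same point: there is no known inverse to a satellite operation at the level of knot Floer homology, so one must prove directly that the pattern-induced map on immersed curves is faithful enough that a staircase output forces staircase inputs — the companion curve, and (by degeneration) the unknot-fed pattern curve. Making this ``non-decrease of complexity'' precise and uniform over the entire arc of filling slopes, together with the mirror/positivity bookkeeping, is where the real difficulty lies, and for general patterns this is exactly what is currently missing.
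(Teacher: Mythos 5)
The statement you are addressing is a \emph{conjecture} in the paper, and your write-up is candid that it does not prove it; so the relevant comparison is with what the paper actually establishes, namely Proposition~\ref{prop:losatellite}: the conjecture holds \emph{conditionally on} the Boyer--Gordon--Watson Conjecture~\ref{conj:lolspace}. The paper's route is entirely different from yours. It converts ``L-space'' into ``non-left-orderable $\pi_1$'' for all sufficiently large surgeries on $P(K)$, handles the pattern by citing Clay--Watson (non-left-orderability of $\pi_1(S^3_\alpha(P(K)))$ forces the same for $\pi_1(S^3_\alpha(P))$), and handles the companion by building a degree-one map $S^3_n(P(K)) \to S^3_{n/w^2}(K)$ (extending a degree-one map $(P;n)\to S^1\times D^2$ by the identity on the exterior of $K$, using Lemma~\ref{lem2} to identify the right slope) and then applying Boyer--Rolfsen--Wiest. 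The payoff of that approach is that the only unproven input is a single well-studied conjecture; the cost is that it proves nothing unconditionally. Your approach, if completed, would be an unconditional Floer-theoretic proof, which is strictly stronger.

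The genuine gap in your sketch is the one you name yourself, and it is not a small one: the claim that ``the satellite operation does not decrease Floer complexity,'' i.e.\ that a non-staircase component of $\gamma(X_K)$ must produce excess intersection with $\gamma(X_P(\alpha))$ for every $\alpha$ in the arc $[2g(P(K))-1,\infty]$. Nothing in the immersed-curve pairing theorem gives this for free for an arbitrary pattern $P$: the filled pattern exterior $X_P(\alpha)$ is a general manifold with torus boundary, and a priori its curve could be positioned so as to intersect the extra components of $\gamma(X_K)$ minimally (or those components could contribute cancelling intersections), so minimality of the total intersection does not obviously force simplicity of $\gamma(X_K)$. A uniform statement over the whole arc of slopes is exactly what one would need, and you give no mechanism for producing it. The same issue recurs in your treatment of the pattern, where the asserted ``monotonicity'' (the unknot-fed output embedding in the $K$-fed output) is stated but not argued. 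Two further technical points you should flag: (i) $X_P$ has two boundary tori, so you need the bimodule/pairing machinery for multiple boundary components, not just the single-torus immersed-curve theory; and (ii) your mirror-elimination step via $\tau$ quietly assumes a satellite formula for $\tau$ that, for a general pattern $P$, is itself not available without further hypotheses. As a research program this is a reasonable direction, but as written it is a restatement of the difficulty rather than a proof, and it should not be presented as resolving the conjecture.
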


Similarly, we conjecture that the converse holds as well, contingent on the pattern being embedded ``nicely'' in the solid torus (e.g., as a strongly quasipositive braid closure) and sufficiently ``positively twisted'' (akin to the condition in Theorem~\ref{thm}).  We will not attempt to make these notions precise in this paper.  

As supporting evidence for Conjecture~\ref{conj:satlspace}, we will study it from the viewpoint of left-orderability.  Recall that a non-trivial group $G$ is {\em left-orderable} if there exists a left-invariant total order on $G$ (see Section~\ref{sec:furtherresults} for a more detailed discussion).  We recall the conjecture of Boyer, Gordon, and Watson relating Heegaard Floer homology to the left-orderability of three-manifold groups.

\begin{conj}[Boyer-Gordon-Watson \cite{BGW2012}]\label{conj:lolspace}
Let $Y$ be an irreducible rational homology sphere.  Then $Y$ is an $L$-space if and only if $\pi_1(Y)$ is not left-orderable.  
\end{conj}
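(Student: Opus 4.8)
The plan is to prove both directions of the biconditional by routing through the intermediate notion of a co-orientable taut foliation, following the philosophy of what is now called the L-space conjecture: I would aim to establish the two-way chain ``$Y$ is an L-space'' $\Longleftrightarrow$ ``$Y$ admits no co-orientable taut foliation'' $\Longleftrightarrow$ ``$\pi_1(Y)$ is not left-orderable,'' and then read off the desired statement from the outer two terms. One of the four implications is already in hand from the excerpt: by \cite[Theorem 1.4]{Ozsvath2004b} an L-space admits no co-orientable taut foliation. The remaining work splits into the \emph{analytic} equivalence (L-space $\Longleftrightarrow$ no taut foliation) and the \emph{dynamical} equivalence (no taut foliation $\Longleftrightarrow$ not left-orderable), each of which I would attack separately and in both directions.

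For the direction ``$\pi_1(Y)$ left-orderable $\Rightarrow$ $Y$ not an L-space,'' I would begin with the standard fact that a countable left-orderable group admits a faithful orientation-preserving action on $\mathbb{R}$, and hence a nontrivial action on the circle. The target is to upgrade this purely algebraic action to a co-orientable taut foliation of $Y$ whose holonomy realizes it; once a taut foliation is produced, \cite[Theorem 1.4]{Ozsvath2004b} closes the argument. Conversely, for ``$Y$ not an L-space $\Rightarrow$ $\pi_1(Y)$ left-orderable,'' I would first produce a co-orientable taut foliation on a non-L-space rational homology sphere (the open ``no taut foliation $\Rightarrow$ L-space'' half of the analytic equivalence), and then extract left-orderability from it: a co-orientable taut foliation of an irreducible rational homology sphere yields, via Thurston's universal-circle construction and the leftmost-section machinery of Calegari and Dunfield, a faithful action of $\pi_1(Y)$ on $\mathbb{R}$ (the obstructing Euler class vanishes because $b_1(Y)=0$), so that $\pi_1(Y)$ is left-orderable.

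The serious obstacle, and the reason this stands as a conjecture rather than a theorem, is that no general mechanism is known for translating Heegaard Floer data into a taut foliation or conversely; the analytic equivalence is the true bottleneck in both directions, since neither the vanishing nor the non-vanishing of the reduced Floer homology is known to be detected by any foliation-theoretic quantity in general. The dynamical equivalence is better understood---taut foliations reliably produce orderings, and in many geometric settings orderings produce foliations---but even there the construction of a foliation from a bare left-order is not automatic, so that this half too remains incomplete outside of well-behaved classes.

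I would therefore begin where all three structures are simultaneously computable and build outward toward the general statement. On Seifert fibered rational homology spheres the horizontal and vertical foliation theory of Eisenbud, Hirsch, and Neumann, together with the $\widehat{HF}$ computations of Ozsv\'ath and Szab\'o, lets one verify all three conditions by hand and match them; one then promotes this to graph manifolds by analyzing how left-orders and taut foliations glue across the JSJ tori, running an induction on the number of geometric pieces. The hyperbolic pieces that appear in the JSJ decompositions of Theorem~\ref{thm:jsj} are exactly where this program stalls, as there is at present no general computation of either left-orderability or taut foliations for hyperbolic rational homology spheres; closing that gap---supplying a uniform bridge between the Floer-theoretic and the dynamical sides valid for all irreducible rational homology spheres---is the crux on which a proof of the full conjecture depends.
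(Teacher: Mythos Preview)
The statement you are asked to prove is a \emph{conjecture}, not a theorem, and the paper does not prove it. It is stated as Conjecture~\ref{conj:lolspace} (the Boyer--Gordon--Watson conjecture) and is then \emph{assumed} as a hypothesis in Proposition~\ref{prop:losatellite}. There is no proof of it in the paper to compare against.

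Your proposal is honest about this: you yourself write that ``the reason this stands as a conjecture rather than a theorem'' is that the bridge between Heegaard Floer data and taut foliations is unknown in general, and you correctly identify the hyperbolic case as the place where the inductive program stalls. So what you have produced is not a proof but a survey of the known partial implications and a sketch of how one might hope to proceed. That is a reasonable piece of exposition, but it is not a proof attempt in any meaningful sense, and several of the intermediate steps you list are themselves open (for instance, ``no taut foliation $\Rightarrow$ L-space'' is not known, and the passage from a bare left-order to a taut foliation is not known in general either). If the task was to supply a proof, the only correct answer is that none exists at present; if the task was to explain the status of the conjecture, your write-up does that adequately, though you should be clearer that neither direction of the biconditional is currently a theorem for general irreducible rational homology spheres.
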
   

We point out that the computational strengths of Heegaard Floer homology and left-orderability tend to be fairly different.  It is hopeful that if Conjecture~\ref{conj:lolspace} is true then the strengths of each theory could be combined to derive new topological consequences.  We utilize this philosophy to establish Conjecture~\ref{conj:satlspace} under the assumption of Conjecture~\ref{conj:lolspace}.  

\begin{prop}\label{prop:losatellite}
Assuming Conjecture~\ref{conj:lolspace}, if $P(K)$ is an L-space knot, then so are $P$ and $K$.  
\end{prop}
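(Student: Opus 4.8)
The plan is to combine Conjecture~\ref{conj:lolspace} with gluing results for left-orderable groups. Suppose $P(K)$ is an L-space knot; since L-space knots have L-space surgeries along every slope $\ge 2g(P(K))-1$, I may fix a large positive integer $n$ so that $Y:=S^3_n(P(K))$ is an L-space and, in addition, $Y$ is irreducible with the satellite torus $T=\partial N(K)$ incompressible in $Y$. The latter excludes at most one value of $n$ --- the one for which the induced Dehn filling on the pattern exterior $X_P=(S^1\times D^2)\smallsetminus \mathrm{int}\,N(P)$ is the exceptional solid-torus filling --- together with the finitely many reducible surgeries (for satellites these are understood, cf.\ Scharlemann's proof of the cabling conjecture for satellite knots); the cases in which $P$ is not a genuine satellite pattern or $K$ is trivial are immediate or fall under Theorem~\ref{thm}. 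Then $Y$ is an irreducible rational homology sphere, so Conjecture~\ref{conj:lolspace} forces $\pi_1(Y)$ to be non-left-orderable.

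Next I would cut $Y$ along $T$ and argue by contradiction, assuming that $K$ or $P$ fails to be an L-space knot. Write $X_K=S^3\smallsetminus \mathrm{int}\,N(K)$ for the companion exterior and $W$ for the manifold obtained from $X_P$ by Dehn filling along the surgery slope on $\partial N(P)$; then $Y=X_K\cup_T W$, and van Kampen gives $\pi_1(Y)\cong \pi_1(X_K)*_{\mathbb{Z}^2}\pi_1(W)$, amalgamated over $\pi_1(T)\cong\mathbb{Z}^2$ with injective inclusions since $T$ is incompressible in both pieces. Both $X_K$ and $W$ are compact, orientable, irreducible, with a single torus boundary and first Betti number $1$ (for $W$ this follows from $Y$ being a rational homology sphere, via Mayer--Vietoris), so by the theorem of Boyer--Rolfsen--Wiest their fundamental groups are left-orderable. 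The goal is then to produce one left-order on $\pi_1(T)=\mathbb{Z}^2$ extending simultaneously to $\pi_1(X_K)$ and to $\pi_1(W)$: by the amalgamation criterion for left-orderability over $\mathbb{Z}^2$ (Bludov--Glass, together with the gluing and slope-detection framework of Boyer--Gordon--Watson and Boyer--Clay) this makes $\pi_1(Y)$ left-orderable, a contradiction.

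To feed the hypothesis in, I would use the fillings of $\partial W$. Filling along the meridian of the ambient $S^1\times D^2$ recovers $S^3_\bullet(P)$ for the standardly embedded pattern $P$, while nearby slopes recover $S^3_\bullet$ of the knots obtained from $P$ by full positive twists; on the companion side the corresponding slopes on $T$ are surgery slopes on $K$, rescaled by the winding number $w$. If $K$ is not an L-space knot, then by Conjecture~\ref{conj:lolspace} every positive surgery on $K$ that is an irreducible rational homology sphere has left-orderable fundamental group, so $X_K$ order-detects all small positive slopes --- in particular the rational longitude of $W$, which under the satellite gluing corresponds to a small positive surgery slope on $K$; matching it with the abelianization order on $\pi_1(W)$ produces the common order. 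If instead $P$ (equivalently, since $w\ge 0$ and the surgery on $P(K)$ is positive, some positive twist of $P$) is not an L-space knot, one argues symmetrically: left-orderability of the relevant large positive surgeries on $P$ and its twists makes $W$ order-detect slopes that $X_K$ also detects. The winding number also records which of $K,\overline{K}$ and $P,\overline{P}$ is forced to be the L-space knot, and positivity of the surgery on $P(K)$ singles out $K$ and $P$ themselves; the winding-number-$0$ case is handled separately, since then $\Delta_{P(K)}=\Delta_P$ and fiberedness of $P(K)$ is too restrictive to produce an L-space knot.

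The main obstacle is the bookkeeping in this last step: slope detection by a piece carries a co-orientation recording which side of the detected slope supports the positive cone, and the gluing along $T$ is orientation-reversing, so one must match not just the detected slopes on the two sides but their co-orientations. Tracking these through the satellite gluing, the framing shift between $\lambda_P$ and $\lambda_{P(K)}$, and the factor $w^{2}$, is where the real work lies, and it is also where the ``$P$'' half of the conclusion --- as opposed to merely the ``$K$'' half --- is genuinely extracted. The degenerate cases (the filled pattern exterior $W$ being itself a solid torus, so that $Y$ is literally a surgery on $K$ and one concludes directly as in the proof of the Corollary; winding number $0$; and auxiliary surgeries that fail to be irreducible rational homology spheres) must be dispatched separately, but each is either vacuous or reduces to the cabling situation already understood via Theorem~\ref{thm}.
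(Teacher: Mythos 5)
Your plan is a genuinely different strategy from the paper's, but as written it has a real gap at its crux, and the gap is not mere bookkeeping. The entire content of your argument is the production of a single slope on the satellite torus $T$ that is order-detected by \emph{both} $X_K$ and $W=(P;n)$ with compatible co-orientations, and you defer exactly this step. The case you need in order to conclude that $P$ is an L-space knot is the problematic one: suppose $K$ \emph{is} a non-trivial L-space knot but $P$ is not. Then the slopes on $T$ that $X_K$ is guaranteed (via Conjecture~\ref{conj:lolspace}) to order-detect are only those $\beta\mu_K+\lambda$-type slopes with $\beta<2g(K)-1$, while the slopes $W$ is guaranteed to order-detect are $m+k\ell$ for those $k$ such that the relevant surgery on the $k$-times-twisted pattern fails to be an L-space. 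But positive twisting can convert a non-L-space pattern into an L-space knot --- that is precisely the content of Theorem~\ref{thm} --- so there is no a priori reason these two sets of slopes intersect, let alone with matching co-orientations. Relatedly, your identification of the rational longitude of $W$ is off: by Lemma~\ref{lem2} it is $nm+w^2\ell$, which glues to the slope $n/w^2$ on $K$ --- a \emph{large} positive slope for $n\gg 0$, not a small one. That happens to be harmless in the case ``$K$ is not an L-space knot'' (no positive surgery on such a $K$ is an L-space), but it confirms that the slope-matching has not actually been carried out. Finally, the passage from ``$M(\alpha)$ is irreducible with left-orderable $\pi_1$'' to ``$\alpha$ is order-detected by $M$,'' and the Bludov--Glass/Boyer--Clay gluing theorem itself, each carry hypotheses you would need to verify; none of this is routine.

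For comparison, the paper avoids the decomposition-and-gluing machinery entirely and argues by \emph{collapsing} one side of the satellite torus rather than cutting along it. For the pattern, it cites Clay--Watson: collapsing the companion exterior to a solid torus gives a degree one map $S^3_\alpha(P(K))\to S^3_\alpha(P)$, and Theorem~\ref{thm:brw} pushes non-left-orderability forward along non-zero degree maps. For the companion, it chooses $n\gg 0$ with $\gcd(w,n)=1$ so that the rational longitude $nm+w^2\ell$ of $(P;n)$ is primitive in $H_1(\partial(P;n);\mathbb{Z})$ and bounds in $(P;n)$; this yields a degree one map $(P;n)\to S^1\times D^2$ rel boundary, hence a degree one map $S^3_n(P(K))\to S^3_{n/w^2}(K)$, and the same BRW argument applies. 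This only ever uses the easy direction of orderability under maps and requires no common order on $\pi_1(T)$. If you want to salvage your approach, you would essentially be reproving (a special case of) the Boyer--Clay gluing results for knot exteriors glued to pattern exteriors, which is a substantially harder project than the proposition requires.
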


\subsection*{Acknowledgements}

We would like to thank Matthew Hedden for helpful discussions and his interest in our work. We are also grateful to Josh Greene for pointing out Remark~\ref{rmk}, to Allison Moore, David Shea Vela-Vick, and Rachel Roberts for help with the proof of Lemma~\ref{lem:negbraid}, and to Ko Honda for a helpful discussion.  The first author was partially supported by NSF grant DMS-1307879. The second author was partially supported by NSF grant DMS-0636643.

\section{The main result}\label{sec:mainsurgery}
In this section, we provide background on 1-bridge braids in solid tori and Dehn surgery on satellite knots. See \cite{Berge1991, Gabai1990, Gordon1983} for further details. Throughout the rest of the paper, we assume that $P$ is a Berge-Gabai knot in $V = S^1 \times D^2$ (i.e., $P$ admits a non-trivial solid torus surgery) unless otherwise stated. We also consider the standard embedding of $S^1 \times D^2$ into $S^3$ such that $S^1 \times \{*\}$ bounds an embedded disk in $S^3$. When it is clear from context, we will not distinguish between the Berge-Gabai knot $P \subset V$ and $P \subset S^3$.

\subsection{Berge-Gabai knots}\label{sec2:subsec1} The primary goal of this subsection is to highlight the Dehn surgeries on $P \subset V$ that will return a solid torus. In what follows, we provide a setup similar to that of \cite{Gabai1990}.  

An arbitrary knot $P$ in $V$ is called a \emph{1-bridge braid} if $P$ can be isotoped to be a braid in $V$ that lies in $S^1 \times \partial D^2$ except for one arc that is properly embedded in $V$, and $P$ is not a torus knot. Gabai \cite{Gabai1989} showed that any knot in a solid torus with a non-trivial solid torus surgery must be either a torus knot or a 1-bridge braid in $S^1 \times D^2$, and Berge \cite{Berge1991} classified all 1-bridge braids in $S^1 \times D^2$ with non-trivial solid tori fillings. We denote the braid index of $P$ by $w$.

We will consider $\widehat{V}$, the exterior of $P \subset V$. Let $T = \partial V$ and $J = \partial \text{nb}(P)$. We equip $T$  with the homological generators $(m, \ell)$ where $\ell$ is the longitude $S^1 \times \left\{*\right\}$ of $T$ and $m$ is $\left\{*\right\} \times \partial D^2$; therefore, $\ell$ becomes null-homologous after standardly embedding $V$ in $S^3$ and removing $\text{nb}(P)$. 
We equip $J$ with homological generators $(\mu, \Lambda)$ as follows. The generator $\mu$ is the meridian of $P$. Note that $m$ is homologous to $w \mu$ in $\widehat V$. To define $\Lambda$, consider the immersed annulus $A$ connecting $J$ to $T$ with $b$ arcs of self-intersection in Figure \ref{fig1:subfig2}. By doing oriented cut and paste to the arcs of self-intersection we can arrange $A$ to be an embedded surface in $\widehat{V}$ joining $J$ to $T$. Define $\Lambda$ to be $A  \cap J$.
Orient $m$, $\ell$, $\mu$, and $\Lambda$ as in Figure \ref{fig1:subfig2}. Note that $A \cap T= w \ell + t m$, and so $w \ell + t m$ is homologous to $\Lambda$ in $\widehat{V}$.

Let $\lambda$ be the simple closed curve on $J$ that is homologous to $\Lambda - w t \mu \in H_1(J; \mathbb{Z})$. Thus, we have the following equalities in $H_1(\widehat{V}; \Z)$:
\begin{align*}
	[\lambda] &= [\Lambda - wt\mu] \\
		&= [w\ell + tm - wt\mu] \\
		&= [w\ell],
\end{align*}
where the last equality follows from the fact that $m$ is homologous to $w\mu$. In particular, $\lambda$ becomes null-homologous after standardly embedding $V$ in $S^3$ and removing $\text{nb}(P)$.  Now the equation $[\lambda] = [\Lambda - w t \mu]$ can be used to switch from $(\mu, \Lambda)$- to $(\mu, \lambda)$-coordinates, where $(\mu, \lambda)$ are the usual meridian-longitude coordinates on $P$ when $V$ is standardly embedded in $S^3$. 

We recall that a 1-bridge braid in $S^1 \times D^2$ with winding number $w$, bridge width $b$, and twist number $t$ can be represented via the braid word $\sigma = (\sigma_b \sigma_{b-1} ... \sigma_1)(\sigma_{w-1} \sigma_{w-2} ... \sigma_1)^t$ where $|t| \ge 1$, and $1 \le b \le w-2$. The following lemma is a consequence of \cite[Lemma 3.2]{Gabai1990}:

{\lemma\label{lem:bgcharacterization} Let $P$ be a 1-bridge braid in $V$ and $s$ a positive integer. If filling $\widehat{V}$ along a curve $\alpha = d \mu + s \Lambda$ in $J$ yields $S^1 \times D^2$, then $s = 1, d \in \{ b, b + 1 \}$, and $\gcd(w, d) = 1$.

In $(\mu, \lambda)$-coordinates these possible exceptional surgeries are $\alpha =  (t w + d)\mu + \lambda$ where $d \in  \{ b, b + 1 \}$.}\footnote{We have stated Lemma~\ref{lem:bgcharacterization} so that the orientation of $(\mu, \lambda)$ agrees with the standard convention that $\mu \cdot \lambda = 1$. In Gabai's paper \cite{Gabai1990}, $\mu$ is oriented opposite to that of Figure \ref{fig1:subfig2}.}
\\

Note that when $P$ is an $(m,n)$-torus knot in $V$, there are infinitely many surgeries on $P$ that will return a solid torus, including $mn+1 = tw + b + 1$; this follows, for instance, from the proof of \cite[Proposition 3.2]{Moser1971}.

Let $(P; n_1/n_2)$ denote the result of filling $\widehat{V}$ along the curve $n_1 \mu + n_2 \lambda$. Lemma \ref{lem:bgcharacterization} shows that if $P$ is a Berge-Gabai knot, then $(P; p_d)$ will be homeomorphic to $S^1 \times D^2$ for at least one of the coefficients $\displaystyle p_d = t w + d, d \in \{ b, b + 1 \}$.

Note that adding a positive full-twist to all of the $w$ strands of $P$ results in a new knot $P^{'}$ where $t$ changes into $t+w$. Correspondingly, there exists a homeomorphism of the solid torus (doing a positive meridional twist), which takes $P$ to $P^{'}$. Iterating this process $q$ times, we get the following:

{\prop \label{prop1}Let $P$ be a Berge-Gabai knot in $S^1 \times D^2$, standardly embedded in $S^3$, so that $(P; p)$ is homeomorphic to a solid torus. Let $P^{'}$ be the knot obtained from $P$ by adding $q$ positive Dehn twists. Then
\[
 (P^{'}; p+ q w^2) \cong S^1 \times D^2.
\]}

Hence if we have a Berge-Gabai knot $P$ with twist number $t$, adding $q$ full twists to all $w$ strands of $P$ will produce a Berge-Gabai knot with twist number $t + q w$.

\subsection{Surgery on $P(K)$}\label{subsec:surgery}
Let $P(K)$ be a satellite knot with pattern $P \subset V$ and companion $K$. Let $f : V  \rightarrow \nb(K)$ be a homeomorphism that determines the zero framing of $K$, i.e., $[f(S^1 \times \{*\} )] = 0 \in H_1(X; \mathbb{Z})$ where $X = S^3 - \text{nb}(K)$. Thus $P(K) = f(P).$ 

Recall that $m, \ell \in H_1(T; \mathbb{Z})$ are the natural meridian and longitude coordinates of $T= \partial V$, oriented such that $m \cdot \ell = 1$. Recall also that $\widehat{V} = V -\text{nb}(P)$. Note that $H_1(\widehat{V}) = \mathbb{Z}\langle \ell \rangle \oplus \mathbb{Z}\langle \mu\rangle$ where $\mu$ is the class of the meridian of $\text{nb}(P)$. When $P$ is viewed as a knot in $S^3$, let $\lambda \subset \partial \text{nb}(P)$ be the unique curve on $\partial \text{nb}(P)$ which is null-homologous in $S^3 - \text{nb}(P)$ (i.e., the zero framing of $P$).  That is, if $f$ is as above, then $f(\lambda)$ is the zero framing of $P(K)$. Thus, $S^{3}_{p_1/p_2}(P(K)) \cong X \cup_f (P ; p_1/p_2)$, where the notation means $\partial X$ and $\partial (P ; p_1/p_2)$ are identified via the restriction of $f$ to $\partial (P ; p_1/p_2) = \partial V$. With the above notation:

\begin{lemma}[{\cite[Lemma 3.3]{Gordon1983}}]\label{lem2} For relatively prime integers $p_1, p_2$, and $P \subset V$ with winding number $w$:
\begin{itemize}
 \item[(a)]\label{lem2:H1}  $H_1((P ; p_1/p_2); \mathbb{Z}) \cong \mathbb{Z} \oplus \mathbb{Z}_{\gcd(w, p_1)}$.

\item[(b)]\label{lem2:kernel} If $w \neq 0$, the kernel of $H_1(\partial (P; p_1/p_2); \mathbb{Z}) \rightarrow H_1((P; p_1/p_2); \mathbb{Z})$ is the cyclic group generated by $$\displaystyle \frac{p_1}{\gcd(w, p_1)} m + \frac{p_2 w^2}{\gcd(w, p_1)} \ell .$$
\end{itemize}
\end{lemma}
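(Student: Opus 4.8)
The plan is to prove Lemma~\ref{lem2} by a direct Mayer--Vietoris / presentation-matrix computation on the space $(P; p_1/p_2) = \widehat V \cup_J W$, where $W$ is the solid torus glued to $J = \partial \nb(P)$ along the curve $p_1 \mu + p_2 \lambda$. The key input is the structure of $H_1(\widehat V;\mathbb Z)$ established earlier in the excerpt, namely $H_1(\widehat V) = \mathbb Z\langle \ell\rangle \oplus \mathbb Z\langle \mu\rangle$, together with the two homological relations we have already recorded on the boundary tori: in $H_1(\widehat V)$ we have $[m] = w[\mu]$ and $[\lambda] = w[\ell]$ (equivalently $[\Lambda] = w[\ell] + wt[\mu]$, so that $[\lambda] = [\Lambda - wt\mu] = w[\ell]$). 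These two facts are exactly what convert the abstract gluing into an explicit $2\times 2$ (or $3\times 2$) integer matrix.

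**First I would** set up part~(a). The Dehn filling adds to $\widehat V$ a single $2$-handle along $p_1\mu + p_2\lambda$ (the extra $3$-cell of the solid torus does not affect $H_1$), so $H_1((P;p_1/p_2)) = H_1(\widehat V) / \langle p_1[\mu] + p_2[\lambda]\rangle = \bigl(\mathbb Z\langle\ell\rangle \oplus \mathbb Z\langle\mu\rangle\bigr) / \langle p_1\mu + p_2 w\ell\rangle$, using $[\lambda] = w[\ell]$. The quotient of $\mathbb Z^2$ by the single vector $(p_2 w, p_1)$ (in the $(\ell,\mu)$ basis) is $\mathbb Z \oplus \mathbb Z_{d}$ with $d = \gcd(p_2 w, p_1) = \gcd(w, p_1)$, the last equality because $\gcd(p_1,p_2)=1$. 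That gives part~(a).

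**For part~(b)** I would compute the kernel of $\iota_* \colon H_1(\partial(P;p_1/p_2)) = H_1(T) = \mathbb Z\langle m\rangle \oplus \mathbb Z\langle\ell\rangle \to H_1((P;p_1/p_2))$. The composite $H_1(T) \to H_1(\widehat V) \to H_1((P;p_1/p_2))$ sends $am + b\ell \mapsto aw\mu + b\ell \mapsto [aw\mu + b\ell]$ in the quotient by $(p_2 w, p_1)$. So $am+b\ell$ is in the kernel iff $(b, aw)$ is an integer multiple of $(p_2 w, p_1)$, say $(b, aw) = k(p_2 w, p_1)$; then $aw = kp_1$ and $b = kp_2 w$. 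Writing $d = \gcd(w,p_1)$, the condition $w \mid kp_1$ forces $d \mid k$... — rather, $w a = k p_1$ means $k$ must be a multiple of $w/d$, and the minimal positive solution is $k = w/d$, giving $a = p_1/d$ and $b = p_2 w^2 / d$, i.e. the generator $\tfrac{p_1}{\gcd(w,p_1)} m + \tfrac{p_2 w^2}{\gcd(w,p_1)}\ell$, and the kernel is the cyclic group it generates (any element of the kernel corresponds to $k$ a multiple of $w/d$, hence to a multiple of this generator).

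**The main obstacle** I anticipate is not the linear algebra but getting the boundary identifications and orientation conventions exactly right: one must be careful that $\lambda$ really is the curve homologous to $w\ell$ in $\widehat V$ (this is where the immersed-annulus argument and the switch from $(\mu,\Lambda)$- to $(\mu,\lambda)$-coordinates enter), and that $H_1(\partial(P;p_1/p_2))$ is genuinely $H_1(T)$ with its $(m,\ell)$ basis rather than $H_1(J)$. Once those identifications are pinned down, everything reduces to Smith normal form of a rank-one sublattice, so I would present part~(a) and part~(b) together as a single matrix computation and simply cite \cite[Lemma 3.3]{Gordon1983} for the statement, since it is standard.
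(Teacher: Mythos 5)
Your proof is correct: the identifications $[m]=w[\mu]$ and $[\lambda]=w[\ell]$ in $H_1(\widehat V)=\mathbb{Z}\langle\ell\rangle\oplus\mathbb{Z}\langle\mu\rangle$ reduce both parts to the quotient of $\mathbb{Z}^2$ by the vector $(p_2w,p_1)$, and your use of $\gcd(p_2w,p_1)=\gcd(w,p_1)$ and the divisibility argument $k\in (w/\gcd(w,p_1))\mathbb{Z}$ for part (b) are both right. The paper gives no proof of its own, simply citing \cite[Lemma 3.3]{Gordon1983}, and your computation is essentially the standard argument found there, so nothing further is needed.
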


Note that Lemma \ref{lem2} is valid regardless of whether or not $P$ is a Berge-Gabai knot. However, when $P$ is a Berge-Gabai knot, we can use Lemma \ref{lem2} to relate surgeries on $K$ and $P(K)$ in the following sense. 

{\cor \label{cor1}Let $P$ be a Berge-Gabai knot in $V$ with winding number $w$ so that $(P; p) \cong S^1 \times D^2$. Then
\[ \displaystyle S^3_p(P(K)) \cong S^3_{p/{w^2}}(K).\]}

\begin{proof}
The result essentially follows from the fact that $$S^{3}_{p}(P(K)) \cong X \cup_f (P ; p).$$ \noindent By assumption, $(P;p)$ is homeomorphic to a solid torus. Therefore, in order to find the corresponding surgery coefficient on $K$, one needs to determine the slope of the meridian of $\partial (P; p)$ under the canonical identification with $\partial V$, and where it is sent under $f$.  

Note that the slope of the meridian of $(P;p)$ is precisely the generator of $$\ker\Big( H_1(\partial (P; p); \mathbb{Z}) \rightarrow H_1((P; p); \mathbb{Z})\Big).$$  
\noindent Using the identification of $\partial V$ and $\partial (P;p)$, we have that the slope of the meridian, in $(m,\ell)$-coordinates, is given by $(p,w^2)$ by Lemma~\ref{lem2}.  Since $f$ sends $m$ (respectively $\ell$) to the meridian (respectively longitude) of $K$, the result follows.
\end{proof}

\noindent Combining Lemma~\ref{lem:bgcharacterization} with Corollary~\ref{cor1}, we deduce the following:

{\prop \label{prop2}Let $P$ be a Berge-Gabai knot with bridge width $b\ne 0$, winding number $w$, and twist number $t$, and let $K$ be an arbitrary knot in $S^3$. Then for at least one $d \in \{ b, b + 1 \}$, 

\[ S^{3}_{d + tw}(P(K)) \cong  S^{3}_{\frac {d + tw}{w^2}}(K). \]
}

Note that $\gcd(d+tw, w^2)=1$ (see Lemma~\ref{lem:bgcharacterization}). We end this subsection by stating the following lemma, which turns out to be useful during the course of proving Theorem \ref{thm}. Let $\Delta_K(T)$ denote the symmetrized Alexander polynomial of $K$. Recall the behavior of the Alexander polynomial for satellites (see for instance \cite{Lickorish1997}): 
\begin{equation}\label{eqn:alexandersatellite}
\Delta_{P(K)}(T) = \Delta_{P}(T) \Delta_{K}(T^w).
\end{equation}
{\lemma \label{lem3} Let $P(K)$ be a fibered satellite knot where $P$ has winding number $w$. Then $$g(P(K)) = g(P) + w g(K).$$
\noindent Furthermore, if $P$ is a Berge-Gabai knot as above with $t>0$, then 
\begin{equation} \label{eqn:gP}
	g(P) = \frac {(t - 1)(w - 1) + b}{2}.
\end{equation}
}
\begin{proof}

Since $P(K)$ is a fibered knot, we deduce that $\deg \Delta_{P(K)}(T) = g(P(K))$. It also follows that $K$ and $P$ are both fibered \cite{Hirasawa2008}. Combining these two facts with \eqref{eqn:alexandersatellite}, we see that $g(P(K)) = g(P) + w g(K)$. 

In order to calculate $g(P)$, notice that $P$ is a positive braid if $t>0$. Hence, the Seifert surface $R$ obtained from Seifert's algorithm is a minimal genus Seifert surface for $P$ \cite{Stallings1978}. Then
\[ \chi (R) = 1 - 2 g(P) \Rightarrow w - b - t(w - 1) = 1 - 2 g(P). \]
\end{proof}

\subsection{Input from Heegaard Floer theory} \label{sec:HFinput}
In this subsection we mainly use the notation of \cite{Hom2011a}. Recall that an \emph{L-space} $Y$ is a rational homology sphere with the simplest possible Heegaard Floer homology, i.e., $\rank \widehat{HF}(Y) = |H_1(Y ; \mathbb{Z})|$. We say that a knot $K$ in $S^3$ is an \emph{L-space knot} if it admits a positive L-space surgery.  

We let $\tau(K)$ denote the integer-valued concordance invariant from \cite{Ozsvath2003}. Let $\mathcal{P}$ denote the set of all knots $K$ for which $g(K) = \tau(K)$.  (Recall from \cite{Hedden2010} that for fibered knots, $g(K) = \tau(K)$ is equivalent to being strongly quasipositive.) If $K$ is an L-space knot, then $K \in \mathcal{P}$. This follows from  \cite[Corollary 1.6]{Ath} and the fact that L-space knots are fibered \cite[Corollary 1.3]{Ni2009}.

Let
\[
s_{K} = \sum_{i \in \mathbb{Z}} \left( \rank  H_*(\widehat{A}^K_{i}) - 1\right),
\]
where $\widehat{A}^K_{i}$ is the subquotient complex of $CFK^{\infty}(K)$ defined in \cite{Ozsvath2008}. It is proved in \cite{Hom2011a} that $\rank  H_*(\widehat{A}^K_{i})$ is always odd, and so $s_{K}$ is always a non-negative even integer.  For a pair of relatively prime non-zero integers $m$ and $n$, $n>0$, let
\begin{equation}\label{nu-equation}
t^{m/n}_{K} = 2 \max(0, n(2 \nu(K) - 1) - m).
\end{equation}
Observe that
\begin{equation}\label{eqn:tmn}
t^{m/n}_{K} = 0 \quad  \text{ if and only if } \quad m/n \ge 2 \nu(K) - 1.
\end{equation}
The term $\nu(K)$ is another integer-valued invariant of $K$, defined in \cite[Definition 9.1]{Ozsvath2010}, which is bounded below by $\tau(K)$ and above by $g(K)$. In particular, if $K \in \mathcal{P}$, then $\nu(K) = g(K)$.  

Let $m$ and $n$ be as above, and suppose that $\nu(K) \ge \nu(\overline{K})$ where $\overline{K}$ denotes the mirror of $K$.  (This condition is automatically satisfied for $K \in \mathcal{P}$.) If $\nu(K)>0$ or $m>0$, then
\begin{equation}\label{eqn:rankformula}
\rank \widehat{HF}(S^3_{m/n}(K)) = m + n s_{K} + t^{m/n}_K
\end{equation}
by \cite[Proposition 9.6]{Ozsvath2010}.

By \eqref{eqn:rankformula}, when $m>0$ we have that
\begin{equation} \label{eqn:ts}
	S^3_{m/n}(K) \textup{ is an L-space if and only if } t^{m/n}_K=0 \textup{ and } s_{K} = 0.
\end{equation}
By \cite[Theorem 4.4]{Ozsvath2004}, the group $H_*(\widehat{A}^K_{i})$ is isomorphic to $\displaystyle \widehat{HF}(S^3_N(K), [i])$ for $N \gg 0$ and $|i| \leq N/2$. Thus, we have that
\begin{equation} \label{eqn:s}
	K \textup{ is an L-space knot if and only if } s_K=0.
\end{equation}
In fact, if $K$ is a non-trivial L-space knot, $S^3_{m/n}(K)$ is an L-space if and only if $m/n \geq 2g(K) - 1$.  This follows from \eqref{eqn:tmn}, \eqref{eqn:rankformula}, and the fact that for $K$ a non-trivial L-space knot, $\nu(K) = g(K) > 0$. (The original argument for the forward direction is given in \cite{Kronheimer2007}.)

\subsection{Proof of Theorem \ref{thm}}
This subsection is devoted to the proof of Theorem \ref{thm}.  We begin with the proof of Lemma~\ref{lem:negbraid}. We do not review the concept of a quasipositive Seifert surface but instead refer the reader to \cite{Hedden2010, Rudolph}.

\begin{proof}[Proof of Lemma~\ref{lem:negbraid}]
Suppose for contradiction that $P(K)$ is an L-space knot.  Recall that L-space knots are fibered \cite{Ni2009, Ath}.    
It is also a well-known fact that a minimal genus Seifert surface for a negative braid can be expressed as a plumbing of negative Hopf bands \cite[Theorem~2]{Stallings1978}. (See also \cite[Theorem~1]{akbulut2001lefschetz} for an explicit construction in the case of torus knots.) Since $P(K)$ is fibered, this implies that $K$ is fibered and $P$ is fibered in the solid torus \cite{Hirasawa2008}, so the fiber for $P(K)$ is constructed by patching the fiber for $P$ in the solid torus to $w$ copies of the fiber for $K$. As a result, when $P$ is a negative braid, the fiber surface for $P(K)$ contains (at least) as many negative Hopf bands as the one for $P$. 

By the above description of the fiber surface, we can deplumb a negative Hopf band.  This means we can decompose the fiber surface for $P(K)$ as a Murasugi sum, where one of the summands is not a quasipositive surface.  By \cite{Rudolph}, if a Seifert surface is a Murasugi sum, it is quasipositive if and only if all of the summands are quasipositive.  Thus, the fiber surface for $P(K)$ is not a quasipositive surface.  However, since $P(K)$ is an L-space knot, it is strongly quasipositive \cite{Hedden2010}, which gives a contradiction.     
\end{proof}  

We prove Theorem~\ref{thm} only for the cases where $b \ne 0$ (consequently $1 \le t_0 \le w-2$) and refer the reader to \cite{Hedden2009, Hom2011a} for the case $b=0$. 
\begin{proof}[Proof of Theorem \ref{thm}]
$(\Leftarrow)$ The proof of this direction follows from Proposition \ref{prop2}, which tells us that
\[ S^3_{d+tw}(P(K)) \cong S^3_{\frac{d+tw}{w^2}}(K). \]
Since $K$ is a non-trivial L-space knot and $\frac{b+tw}{w^2} \geq 2g(K)-1 > 0$, it follows that $S^3_{\frac{d+tw}{w^2}}(K)$ is an L-space.  Here we are using that $d \geq b$.  Therefore, $P(K)$ is an L-space knot.

$(\Rightarrow)$ For the case that $t<0$ (see Remark~\ref{rmk}), we apply Lemma~\ref{lem:negbraid} to see that $P(K)$ cannot be an L-space knot.  Therefore, we can assume that $t>0$ and $P(K)$ is an L-space knot. For simplicity of notation, we set $m = d + t_0 w + q w^2$ where $d \in \{ b, b + 1 \}$ is such that $(P;m) \cong S^1 \times D^2$.  Again from Proposition \ref{prop2} we have 
\begin{equation}\label{eqn:rankequality}
\rank \widehat{HF}(S^{3}_{m}(P(K))) = \rank\widehat{HF}(S^{3}_{m/w^2}(K)).
\end{equation}
Since $P(K)$ is an L-space knot, it follows that $g(P(K)) = \tau(P(K))$, and we see that
\begin{equation}\label{eqn:tP} 
t^{m}_{P(K)} = 2 \max(0, 2 g(P(K)) - 1 - m).
\end{equation} 

We first suppose that $\nu(K) \ge \nu(\overline{K})$.  Since $m>0$, we may combine \eqref{eqn:rankformula}, \eqref{eqn:s}, and \eqref{eqn:rankequality} to obtain 
\[ 
m + t^{m}_{P(K)} = m + w^2 s_{K} + t^{m/w^2}_K ,
\]
or equivalently
\begin{equation} \label{eqn:t}
t^{m}_{P(K)} = w^2 s_{K} + t^{m/w^2}_K.
\end{equation}
Note that by Lemma~\ref{lem3}, \eqref{eqn:gP}, and \eqref{eqn:tP}, we have that 
\begin{equation}\label{eqn:tg}
t^m_{P(K)} = \max (0,4wg(K) - 2w - 2t_0 - 2qw + 2b - 2d).  
\end{equation}
\emph{Claim}. The equality in \eqref{eqn:t} does not hold unless  both sides are identically zero.
\begin{proof}[Proof of the Claim]
If $t^{m}_{P(K)} \ne 0$ then we have two cases:

\begin{itemize}
	\item[Case 1.] Suppose $t^{m/w^2}_K = 0$. Using \eqref{eqn:tg}, we see \eqref{eqn:t} is equivalent to
 \[4 w g(K) - 2 w - 2 t_0 - 2 q w +2b - 2d= w^2 s_K. \]
It follows that $w$ divides $2t_0+2d-2b$. Since $d-b \in \left \{ 0, 1 \right \}$ and $1 \leq t_0 \leq w-2$, we conclude that $w=2t_0+2d-2b$. Since $$4 w g(K) - 2 w - w - 2 q w = w^2 s_K,$$ then
\[ 4 g(K) - 3 - 2 q  = w s_K. \]
The right side is an even number and the left side is odd which is a contradiction.
	\item[Case 2.] Suppose $t^{m/w^2}_K \ne 0$. By expanding both sides of \eqref{eqn:t} and again using \eqref{eqn:tg}, we see that
\[
       4 w g(K) - 2 w - 2 t_0  - 2 q w + 2b - 2d= w^2 s_K + 4 w^2 \nu(K) - 2 w^2 - 2 d - 2 t_0 w - 2 q w^2.
\]
By rearranging terms, we get
\[
4 w g(K) - 2 w + 2 (b - t_0)  - 2 q w + 2 t_0 w =  w^2 (4 \nu(K) - 2 - 2 q + s_K). 
\]
Therefore $w$ divides $2(b-t_0)$.  Since $b$ and $t_0$ are both bounded above by $w-2$, we have either $2(b - t_0) = \pm w$ or $b = t_0$.  

Recall that we described $P$ as a braid closure in Section~\ref{sec:introduction}.  Viewing this braid as a mapping class of the disk with $w$ punctures, it is straightforward to verify that if $b = t_0$, the $(t_0+1)^\textup{th}$ puncture is fixed. Therefore, in this case $P$ has at least two components, which contradicts $P$ being a knot.  Thus, we must have $2(b - t_0) = \pm w$.  

Substituting and dividing by $w$ gives:
\[
4 g(K) - 2 \pm 1  - 2 q  + 2 t_0 = w (4 \nu(K) - 2 - 2 q + s_K).
\]
As in Case 1, comparing the parities of each side gives a contradiction.
\end{itemize}
\end{proof}
\noindent Having proved the claim, all the terms in \eqref{eqn:t} are identically zero. Since $ s_{K} =0$, \eqref{eqn:s}  gives that $K$ is an L-space knot. Also, $ t^{m}_{P(K)} = 0$ together with \eqref{eqn:tg} implies  
\begin{equation}\label{eqn:th}
 \displaystyle \frac{t_0 + q w + d - b}{w} \ge 2 g(K) - 1.
\end{equation}

Since $1 \le t_0 \le w-2$ and $(d-b) \in \left \{ 0,  1 \right \}$, we have that $0 \leq t_0 + d - b < w$. Note that $2g(K) - 1$ is an integer, so we deduce that \eqref{eqn:th} holds if and only if
\[ q \geq 2g(K)-1, \]
which implies that
\[ \displaystyle \frac{b + t_0 w + q w^2}{w^2} \ge 2 g(K) - 1, \]
as desired. 

Now suppose that $\nu(K) < \nu(\overline{K})$. We claim that in this case, $P(K)$ is not an L-space knot, which is a contradiction.  Recall from \cite[Equation (34)]{Ozsvath2010} that $\nu(K)$ is equal to either $\tau(K)$ or $\tau(K)+1$, and from \cite[Lemma 3.3]{Ozsvath2003} that $\tau(\overline{K}) = -\tau(K)$. Thus, when $\nu(K) < \nu(\overline{K})$, it follows that $\nu(\overline{K})>0$. By \cite[Proposition 2.5]{Ozsvath2004a}, the total rank of $\widehat{HF}(Y)$, for a closed three-manifold $Y$, is independent of the orientation of $Y$, i.e., 
\begin{equation}\label{eqn:orientation}
\rank \widehat{HF}(Y) = \rank \widehat{HF}(-Y).  
\end{equation}
By combining \eqref{eqn:orientation}, Proposition~\ref{prop2}, and the fact that 
\begin{equation} \label{eqn:orientationreverse}
S^3_{m/n}(K) \cong -S^3_{-m/n}(\overline{K}),
\end{equation}
we deduce that 
\begin{equation}\label{eqn:rankequality2}
\rank \widehat{HF}(S^{3}_{m}(P(K))) = \rank \widehat{HF}(S^{3}_{-m/w^2}(\overline{K})).
\end{equation}
By combining  \eqref{eqn:rankformula}, \eqref{eqn:s}, and \eqref{eqn:rankequality2}, since $P(K)$ is an L-space knot, we have  
\begin{equation} \label{eqn:t2}
m + t^{m}_{P(K)} = -m + w^2 s_{\overline{K}} + t^{-m/w^2}_{\overline{K}}.
\end{equation}
Using \eqref{nu-equation} and the fact that $\nu(\overline{K})>0$, we observe that $t^{-m/w^2}_{\overline{K}} \ne 0$. \\

\noindent \emph{Claim}. The equality in \eqref{eqn:t2} never holds.
\begin{proof}[Proof of the Claim]
We prove the claim by considering the following two cases:
\begin{itemize}
              \item[Case 1.] Suppose $t^m_{P(K)} \ne 0$. Using \eqref{eqn:tg}, by expanding both sides of \eqref{eqn:t2} we get that
\begin{align*}
&d + t_0 w +q w^2 +  4 w g(K) - 2 w - 2 t_0 - 2 q w +2b - 2d  \\
&= -d - t_0 w - qw^2 + w^2 s_{\overline{K}} +4 w^2 \nu(\overline{K}) - 2 w^2 + 2 d + 2 t_0 w + 2 q w^2.
\end{align*}
A similar reasoning as in Case 1 of the previous part of the proof shows that this equality gives a contradiction.
            \item[Case 2.] Suppose $t^m_{P(K)} = 0$. Using \eqref{eqn:tg}, we see that \eqref{eqn:t2} is equivalent to 
\[
d + t_0 w +q w^2
= -d - t_0 w - qw^2 + w^2 s_{\overline{K}} +4 w^2 \nu(\overline{K}) - 2 w^2 + 2 d + 2 t_0 w + 2 q w^2.
\] 
\noindent This equation reduces to $2w^2 = w^2 s_{\overline{K}} + 4w^2\nu(\overline{K})$.  However, this equation has no solutions, since $\nu(\overline{K}) > 0$ and $s_{\overline{K}} \ge 0$.  
\end{itemize}
\end{proof}
\noindent Having proved the claim, it follows that if $\nu(K) < \nu(\overline{K})$, then $P(K)$ could not have been an L-space knot.  This completes the proof.

\end{proof}

\section{Proofs of Theorem~\ref{thm:jsj} and Proposition~\ref{prop:losatellite}}\label{sec:furtherresults}

Before proving Theorem~\ref{thm:jsj} and Proposition~\ref{prop:losatellite} we remind the reader of a standard fact about geometric structures and Dehn surgery which we will make use of repeatedly without reference (see \cite[Proposition 5]{Heil1974} and \cite[Section 5]{Thurston}).  Suppose that $M$ is a compact, orientable, irreducible manifold with incompressible torus boundary (e.g., the exterior of a non-trivial knot in $S^3$).  Then all but finitely many Dehn fillings of $M$ are irreducible and have the same numbers of hyperbolic and Seifert fibered pieces in their JSJ decompositions as $M$.

\subsection{JSJ decompositions and L-spaces}

\begin{proof}[Proof of Theorem~\ref{thm:jsj}]
In order to construct the family of manifolds described in the statement of the theorem, we will first construct an L-space satellite knot $K_{s,r}$ with $s$ Seifert fibered pieces and $r$ hyperbolic pieces in the JSJ decomposition.  The knot $K_{s,r}$ will be constructed by a sequence of satellite operations using cables and Berge-Gabai knots.  As discussed, all but finitely many surgeries on $K_{s,r}$ will then be irreducible rational homology spheres with the desired JSJ decomposition.  Since all surgeries with slope at least $2g(K_{s,r})-1$ will result in L-spaces (see Subsection \ref{sec:HFinput}), sufficiently large surgeries on $K_{s,r}$ will produce the desired infinite family.  

Recall that if $P$ is a torus knot standardly embedded in the solid torus, then the exterior of $P$ is Seifert fibered over the annulus with a single cone point.  
We first construct a knot $K_s$ as an $s$-fold iterated torus knot with appropriately chosen cabling parameters.  More specifically, we construct $K_s$ as follows.  If $s$ is 0, we simply take $K_s$ to be the unknot.  Otherwise, we begin with $K_1$, the positive $(m_1,n_1)$-torus knot, for some $m_1, n_1 \geq 2$.  Perform the $(m_2,n_2)$-cable, choosing $n_2/m_2 \geq 2g(K_1) - 1$, to obtain the knot $K_2$.  Inductively, we construct $K_i$ to be the $(m_i,n_i)$-cable of $K_{i-1}$, where we choose $n_i/m_i \geq 2g(K_{i-1}) - 1$.  The JSJ decomposition of the exterior of $K_s$ now consists of $s$ Seifert pieces.  Further, by \cite{Hedden2009}, $K_s$ is an L-space knot.  
 
Let $P_1$ be a positively twisted hyperbolic Berge-Gabai knot satisfying $\frac{b+ t_0 w + q w^2}{w^2} \geq 2g(K_s) - 1$.  We can construct $P_1$ as follows. Begin with any hyperbolic Berge-Gabai knot (i.e., hyperbolic in $S^1 \times D^2$; see \cite[Theorem 3.2 and p.17]{Berge1991} to obtain explicit examples).  Now add sufficiently many positive twists until the desired inequality is satisfied (fix $b, t_0,$ and $w$, and increase $q$) to obtain $P_1$.  As discussed in Section~\ref{sec2:subsec1}, adding positive twists preserves the property of being a Berge-Gabai knot; furthermore, this does not change the type of geometry on the knot exterior, and thus $P_1$ will still be hyperbolic.  If $s \neq 0$, we define $K_{s,1}$ as the satellite knot with companion $K_s$ and pattern $P_1$.  By Theorem~\ref{thm}, $K_{s,1}$ is an L-space knot.  If $s = 0$, take $K_{s,1}$ to be any hyperbolic L-space knot, such as the $(-2,3,7)$-pretzel knot \cite{FS1980}.  We now repeat this process $r$ times, i.e., to obtain $K_{s,i}$, satellite $K_{s,i-1}$ with pattern a hyperbolic Berge-Gabai knot satisfying $\frac{b+ t_0 w + q w^2}{w^2} \geq 2g(K_{s,i-1}) - 1$.  The process terminates at the knot $K_{s,r}$ whose exterior is irreducible and has $s$ Seifert and $r$ hyperbolic pieces in its JSJ decomposition.  We have that $K_{s,r}$ is an L-space knot by repeated application of Theorem~\ref{thm}.  As discussed above, this completes the proof.
\end{proof}

\subsection{Left-orderability}
Recall that a non-trivial group $G$ is {\em left-orderable} if there exists a left-invariant total order on $G$.  Examples of left-orderable groups include $\mathbb{Z}$ and $Homeo_+(\mathbb{R})$, while any group with torsion (e.g., a finite group) is not left-orderable.  It is natural to ask which three-manifold groups can be left-ordered.  Such groups are well-suited for this study due to the following theorem.  

\begin{thm}[Boyer-Rolfsen-Wiest \cite{BRW2005}]\label{thm:brw}
Let $Y$ be a compact, connected, irreducible, $P^2$-irreducible three-manifold.  If there exists a non-trivial homomorphism $f:\pi_1(Y) \to G$ where $G$ is left-orderable, then $\pi_1(Y)$ is left-orderable.  In particular, if there exists a non-zero degree map from $Y$ to $Y'$, where $\pi_1(Y')$ is left-orderable, then $\pi_1(Y)$ is left-orderable.  
\end{thm}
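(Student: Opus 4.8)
The plan is to derive the theorem from two classical results: the Burns--Hale criterion for left-orderability and Scott's compact core theorem. Recall the Burns--Hale theorem: a group is left-orderable if and only if every non-trivial finitely generated subgroup admits a non-trivial left-orderable quotient (a surjection onto $\mathbb{Z}$ always suffices). Since every subgroup of a left-orderable group is left-orderable, I would first replace $G$ by the image $f(\pi_1(Y))$, so that $f\colon\pi_1(Y)\twoheadrightarrow G$ becomes a surjection onto a \emph{non-trivial} left-orderable group; it then suffices to prove that every non-trivial finitely generated subgroup $H\le\pi_1(Y)$ has a non-trivial left-orderable quotient.

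If $f(H)\neq 1$ there is nothing to do: $f(H)\le G$ is itself a non-trivial left-orderable group and a quotient of $H$. The substantive case is $H\le\ker f$. Here I would pass to the connected cover $Y_H\to Y$ with $\pi_1(Y_H)=H$. A cover of an irreducible, $P^2$-irreducible $3$-manifold is again irreducible and $P^2$-irreducible, and $H$ is finitely generated, so Scott's compact core theorem produces a compact, connected, irreducible, $P^2$-irreducible submanifold $N\subseteq Y_H$ with $\pi_1(N)\cong H$. The key point is the dichotomy on $\partial N$: if $\partial N=\emptyset$, then $N$ is closed, hence open and closed in the connected manifold $Y_H$, so $Y_H=N$ is compact; then $H$ has finite index in $\pi_1(Y)$, so $\ker f$ has finite index, so $G\cong\pi_1(Y)/\ker f$ is finite --- but a finite subgroup of a left-orderable group is trivial, contradicting surjectivity of $f$ onto a non-trivial group. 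Hence $\partial N\neq\emptyset$.

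Once $\partial N\neq\emptyset$, I would invoke the standard fact that a compact, connected, (orientable) irreducible $3$-manifold with non-empty boundary and non-trivial fundamental group has positive first Betti number: irreducibility forbids $S^2$ boundary components, so $b_1(\partial N)\ge 1$, and then half-lives-half-dies forces $b_1(N)\ge 1$. Therefore $H=\pi_1(N)$ surjects onto $H_1(N)$, hence onto $\mathbb{Z}$ --- the desired non-trivial left-orderable quotient. Burns--Hale then gives that $\pi_1(Y)$ is left-orderable. For the ``in particular'' statement, suppose $\varphi\colon Y\to Y'$ has non-zero degree with $\pi_1(Y')$ left-orderable. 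A non-trivial left-orderable group is torsion-free, hence infinite, so the universal cover of $Y'$ is non-compact and has vanishing top homology; if $\varphi_*\colon\pi_1(Y)\to\pi_1(Y')$ were trivial, $\varphi$ would lift through this universal cover and $\deg\varphi$ would vanish. Thus $\varphi_*$ is a non-trivial homomorphism to the left-orderable group $\pi_1(Y')$, and the first part applies (with $Y$ retaining its hypotheses).

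I expect the only genuinely load-bearing step to be the one where a \emph{closed} compact core is shown to force finite index and hence to force $G$ finite: it is precisely this that allows the mere non-triviality of $f$ to rule out the truly bad cases (spherical space forms, or small Seifert fibered rational homology spheres such as $\Sigma(2,3,7)$, whose fundamental groups have no non-trivial left-orderable quotient). The remaining ingredients --- stability of irreducibility and $P^2$-irreducibility under passing to covers, Scott's theorem, and $b_1\ge 1$ for bounded irreducible $3$-manifolds --- are standard, and the non-orientable case (not needed for the applications in this paper) requires only routine modifications.
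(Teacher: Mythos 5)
This statement is an imported result --- the paper cites it to \cite{BRW2005} and gives no proof of its own, so there is nothing internal to compare against. Your argument is, in essence, a correct reconstruction of the original Boyer--Rolfsen--Wiest proof: Burns--Hale reduces the problem to finitely generated subgroups, the case $f(H)\neq 1$ is immediate, and for $H\le\ker f$ the Scott compact core of the cover $Y_H$ either is closed (forcing $G$ finite, hence trivial, a contradiction) or has boundary, whence half-lives-half-dies gives $b_1\ge 1$ and a surjection $H\twoheadrightarrow\mathbb{Z}$; the degree argument for the ``in particular'' clause is also the standard one. The only step stated too quickly is ``irreducibility forbids $S^2$ boundary components'': irreducibility of $N$ alone does not do this ($B^3$ is irreducible), and one should either observe that the only compact irreducible $3$-manifold with a sphere boundary component is the ball (excluded since $H\neq 1$), or absorb/cap off any ball bounded in $Y_H$ by a spherical component of $\partial N$; this is routine and does not affect correctness.
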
  

Rather than define $P^2$-irreducible, we simply point out that if $Y$ is orientable, then irreducibility implies $P^2$-irreducibility.  For compact, orientable, irreducible three-manifolds with $b_1 > 0$, it then follows that their fundamental groups are always left-orderable.  However, there are more interesting phenomena for rational homology spheres; for example $+3/2$-surgery on the left-handed trefoil has left-orderable fundamental group, while $-3/2$-surgery has torsion-free, non-left-orderable fundamental group (this can be deduced for instance from \cite[Theorem 1.3]{BRW2005}).  Surprisingly, the left-orderability of the fundamental groups of three-manifolds is conjecturally characterized by Heegaard Floer homology. The following conjecture was made in \cite{BGW2012}:

\begin{customthm}{\ref{conj:lolspace}}[Boyer-Gordon-Watson]
Let $Y$ be an irreducible rational homology sphere.  Then $Y$ is an $L$-space if and only if $\pi_1(Y)$ is not left-orderable.  
\end{customthm}   

There exists a large amount of support for this conjecture, as it is known to be true for manifolds with Seifert or Sol geometry, branched double covers of non-split alternating links, graph manifold integer homology spheres, and many other families of examples (see for instance \cite{BB2013, BGW2012, P2009}).  We also remark that irreducibility is necessary, as $\Sigma(2,3,7) \# \Sigma(2,3,5)$ has non-left-orderable fundamental group, but is not an L-space.  

In the proof of Proposition~\ref{prop:losatellite} below, we remind the reader that we will be assuming Conjecture~\ref{conj:lolspace}.  

\begin{proof}[Proof of Proposition~\ref{prop:losatellite}]
Suppose that $P(K)$ is an L-space knot.  Then for all $\alpha\in \mathbb{Q}$ with $\alpha \geq 2g(P(K)) - 1$, we have $S^3_\alpha(P(K))$ is an L-space.  For all but finitely many such $\alpha$, we have that $S^3_\alpha(P(K))$ is irreducible as well.  Thus, by Conjecture~\ref{conj:lolspace}, we have that $\pi_1(S^3_\alpha(P(K)))$ is not left-orderable for $\alpha \gg 2g(P(K)) - 1$.  

We first study the pattern $P$.  By \cite[Proposition 13]{Clay2011}, for such $\alpha$, $\pi_1(S^3_\alpha(P))$ is not left-orderable.  Furthermore, for all but finitely many $\alpha$, we have that $S^3_\alpha(P)$ is irreducible.  Therefore, we appeal to Conjecture~\ref{conj:lolspace} to conclude that $P$ is an L-space knot.  

We modify the argument of \cite[Proposition 13]{Clay2011} to study the companion $K$.  Recall that $w$ represents the winding number of $P$ in the solid torus $V$.  We also consider the basis $(m,\ell)$ for $H_1(\partial V;\mathbb{Z})$ as given in Section~\ref{sec:mainsurgery}.  We choose $n \in \Z$ such that $\gcd(w,n) = 1$ and $n \gg 2g(P(K)) - 1$.  As discussed, we have $S^3_n(P(K))$ is irreducible and $\pi_1(S^3_{n}(P(K))$ is not left-orderable.  We consider the manifold $(P;n)$.  We have that the kernel of $i_*:H_1(\partial (P;n);\mathbb{Z}) \to H_1((P;n);\mathbb{Z})$ is generated by $nm + w^2\ell$ by Lemma~\ref{lem2:kernel}.  Since $\gcd(w,n) = 1$ by assumption, we have that the element $nm+w^2\ell$ is represented by a simple closed curve on $\partial (P;n)$ which bounds in $(P;n)$.  It then follows that there exists a degree one map $\phi:(P;n) \to S^1 \times D^2$, which restricts to a homeomorphism on the boundary (see for instance \cite[Lemma 2.2]{Rong1995}).  Since $nm+w^2\ell$ bounds in $(P;n)$, we must have that $\phi(nm+w^2\ell)$ is isotopic to $\{*\} \times D^2$.  

By extending $\phi$ to be the identity on the exterior of $K$, one obtains a degree one map from $S^3_n(P(K))$ to $S^3_{n/w^2}(K)$.  Since $S^3_n(P(K))$ is irreducible and $\pi_1(S^3_n(P(K)))$ is not left-orderable, we have that $\pi_1(S^3_{n/w^2}(K))$ is not left-orderable by Theorem~\ref{thm:brw}.  Since $w$ is fixed, by choosing sufficiently large $n$ with $\gcd(w,n) = 1$, we can arrange that $S^3_{n/w^2}(K)$ is irreducible as well.  Again, by Conjecture~\ref{conj:lolspace}, $K$ is an L-space knot.              
\end{proof}     

\bibliographystyle{amsalpha2}

\bibliography{Reference}
\end{document}